\documentclass[11pt]{article}

\usepackage[a4paper,margin=1in]{geometry}
\usepackage{amsmath,amssymb,amsthm,mathtools,bm}
\usepackage{microtype}
\usepackage{xcolor}
\usepackage{hyperref}
\usepackage{algorithm}
\usepackage{algpseudocode}
\usepackage{cite}

 \usepackage{mathtools}
\usepackage{enumitem}
\usepackage{array}
\usepackage{multirow}
\usepackage{graphicx}
\usepackage{subcaption}
\usepackage{epstopdf}
\usepackage{float}
\usepackage{cite}
 \usepackage{hyperref}
\usepackage{booktabs}

 \newtheorem{theorem}{Theorem}[section]
\newtheorem{lemma}[theorem]{Lemma}
\newtheorem{corollary}[theorem]{Corollary}
\newtheorem{remark}[theorem]{Remark}

 \newtheorem{assumption}{Assumption}[section]

\allowdisplaybreaks[2]
\newcommand{\R}{\mathbb R}
\newcommand{\ip}[2]{\langle #1,#2\rangle}
\newcommand{\norm}[1]{\left\lVert #1\right\rVert}
\newcommand{\cL}{\mathcal L}
\newcommand{\cE}{\mathcal E}
\newcommand{\cS}{\mathcal S}
\newcommand{\trans}{\mathsf T}
\newcommand{\Gap}{\mathcal G}
\DeclareMathOperator{\dom}{dom}

\DeclareMathOperator{\argmin}{argmin}

\title{Accelerated primal-dual dynamics and algorithms for strongly convex-concave bilinear saddle point problems}

\author{Yun-xi Zhang\thanks{School of Science, Xihua University, Chengdu, Sichuan, China. E-mail: zhangyunxiuser@163.com.},\quad Yi-bin Xiao\thanks{Department of Mathematics,
    University of Electronic Science and Technology of China,
    Chengdu, Sichuan, China. E-mail:xiaoyb9999@hotmail.com},\quad  Xin He\thanks{School of Science, Xihua University, Chengdu, Sichuan, China. E-mail: hexinuser@163.com.\\This work was supported by the National Natural Science Foundation of China (Grant No. 12601606, 12571318, 12171070), Sichuan Science and Technology Program (Grant No. 2025ZNSFSC0813), and the Talent Introduction Project of Xihua University (Grant No. Z241102).}}
\date{\today}

\date{}

\begin{document}

\maketitle

\begin{abstract}
In this paper, we develop accelerated primal-dual dynamics and algorithms for strongly convex-concave saddle point problems with bilinear coupling. Combining strongly convex Nesterov dynamics with scaled heavy-ball dynamics, we propose a second-order primal-dual dynamical system with constant damping and derive an associated algorithm by time discretization and coefficient adjustment. Under suitable conditions on the continuous scaling function $\beta(t)$ and the discrete scaling sequence $\beta_k$, we establish primal-dual gap estimates of $\mathcal O(1/\beta(t))$ in continuous time and $\mathcal O(1/\beta_k)$ in discrete time. The maximal admissible growth rates of these scales recover the exponential and linear convergence rates of strongly convex Nesterov methods in continuous and discrete time, respectively. We also prove convergence of the  primal-dual trajectory and sequence, even when the dual solution is nonunique. Saddle residual estimates of $\mathcal O(1/\sqrt{\beta(t)})$ and $\mathcal O(1/\sqrt{\beta_k})$ hold under globally Lipschitz gradients in continuous time and with a possibly nonsmooth dual potential in discrete time. Numerical experiments illustrate the theoretical results and the performance of the proposed methods.
\end{abstract}
{\bf Keywords:} Saddle point problems,  accelerated primal-dual methods, Nesterov acceleration, time scaling, convergence analysis

\section{Introduction}\label{sec:intro}

In this paper, we consider the strongly convex-concave bilinear saddle point problem
\begin{equation}\label{ques_main}
 \min_{x\in\R^n}\max_{y\in\R^m}\cL(x,y)=F(x)+\ip{Ax}{y}-G(y),
\end{equation}
where $A\in\R^{m\times n}$, $F$ is $\mu$-strongly convex with $\mu>0$, and $G$ is convex. This partial strong convexity assumption arises naturally in regularized models in image reconstruction, inverse problems, and statistical learning \cite{ChambollePAN,BoydFTML,ChambollePJMIV,LinAcc,HeHFCNSNS}. In such models, a strongly convex regularization term is imposed on the primal variable, whereas the dual potential need not be strongly convex. Problem~\eqref{ques_main} includes several important classes of convex optimization problems. For example, taking $G(y)=\ip{b}{y}$ gives the linearly constrained strongly convex problem \begin{equation}\label{ques_cons} 
		\min_{x\in\R^n} F(x) \quad\text{s.t.}\quad Ax=b. 
	\end{equation}
 More generally, for a proper, closed, convex function $H$, Fenchel duality relates the composite minimization problem \[ \min_{x\in\R^n} F(x)+H(Ax) \] to the saddle formulation~\eqref{ques_main} with $G=H^*$, where $H^*$ denotes the Fenchel conjugate of $H$. These formulations motivate the study of accelerated primal-dual dynamics and algorithms that exploit the strong convexity of $F$ while allowing $G$ to remain merely convex.

\subsection{Accelerated methods in convex optimization}

 The heavy-ball with friction (HBF) dynamical system introduced by Polyak \cite{PolyakCMMP} is a classical model for convex optimization. For a differentiable convex function $f$ with a nonempty minimizer set, it reads
\begin{equation*}
 \ddot x(t)+\gamma\dot x(t)+\nabla f(x(t))=0,\qquad\gamma>0.
\end{equation*}
Its discretizations lead to heavy-ball inertial accelerated algorithms. For convex objectives with Lipschitz gradients, Ghadimi et al.\ \cite{GhadimiFJECC} obtained an $\mathcal O(1/k)$ objective residual bound for averaged iterates and linear convergence under strong convexity, with suitable parameters. Under coercivity, Sun et al.\ \cite{SunAAAI} established a non-ergodic $\mathcal O(1/k)$ bound with a constant stepsize, and linear convergence under a relaxed strong convexity condition. Introducing a positive scale of (HBF) gives
\begin{equation}\label{eq:intro-hb}
 \ddot x(t)+\gamma\dot x(t)+\beta(t)\nabla f(x(t))=0.
\end{equation}
If  $\beta(t)$ is positive and continuously
differentiable, and $\dot\beta(t)\le\gamma\beta(t)$,  the objective residual enjoys the rate of $\mathcal O(1/\beta(t))$, and $\beta(t)=\beta_0e^{rt}$ with $0<r\le\gamma$ gives exponential decay without strong convexity \cite{Balhag,HeHFTAC}. For related time-scaled dynamics and proximal discretizations, see \cite{LuoCMP,AttouchCRSJO,WibisonoWJPNAS,AttouchMOR}.

For a $\mu$-strongly convex function $f$, the damping can instead be chosen in
terms of the strong convexity constant. The continuous-time counterpart of Nesterov's strongly convex method is

\begin{equation}\label{eq:intro-nag}
 \ddot x(t)+2\sqrt\mu\,\dot x(t)+\nabla f(x(t))=0.
\end{equation}
This is an (HBF) dynamic with damping $2\sqrt{\mu}$, for which $f(x(t))-\min f=\mathcal O(e^{-\sqrt{\mu}t})$ \cite{LuoCMP,WilsonRJJMLR,Siegel}. If $\nabla f$ is $L$-Lipschitz, Nesterov's discrete inertial accelerated method gives $\mathcal O((1-\sqrt{\mu s})^k)$ for $0<s\le1/L$ \cite{NesterovBook,LinAcc}. Its gradient is evaluated at an extrapolated point, so its update differs from the classical heavy-ball iteration. Strong convexity therefore improves the quadratic polynomial convergence rates for convex accelerated dynamics and algorithms to exponential and linear rates, respectively \cite{SuBCJMLR,NesterovBook,LuoCMP,WibisonoWJPNAS}.

These models have also been extended to primal-dual methods for the linearly constrained problem \eqref{ques_cons}. Vanishing damping dynamical systems use primal-dual extrapolations to obtain accelerated Lagrangian and feasibility estimates, and motivate discrete augmented Lagrangian methods \cite{HeHFSJCO,BotNJDE,BotCNMP,LuoJOGO,LuoMC,ZhangAMO,DengSiam}. More closely related to our construction, He et al.\cite{HeHFTAC} combined a second-order primal equation and a first-order dual equation with constant damping, and time scaling $\beta(t)$. Suitable scales give an $\mathcal O(1/\beta(t))$ Lagrangian gap bound, objective residual and feasibility violation even for a merely convex objective. Their discrete construction in \cite{HeHFJCAM} gives scaling-dependent objective and feasibility bounds of $\mathcal O(1/\beta_k)$ and convergence of the primal-dual sequence. For strongly convex objectives, He et al. \cite{HeHFAUTO26} incorporated Nesterov damping in the primal equation and a growing dual scale, obtaining $\mathcal O(e^{-\sqrt\mu t})$ objective and feasibility bounds and an accelerated linearized augmented Lagrangian algorithm. These developments motivate combining primal curvature with dual scaling for a general convex dual potential.

\subsection{Primal-dual acceleration for bilinear saddle point problems}
 
Accelerated primal-dual  methods for bilinear saddle point problems have been studied under different  assumptions \cite{TranMPC,LongCnsns,BaiORL,BaiJNM,LinAcc,BaiJSC25} . In the convex-concave setting, Chambolle and Pock \cite{ChambollePJMIV} established an $\mathcal O(1/k)$ ergodic gap rate for a proximal primal-dual algorithm. Their accelerated variant uses strong convexity of one potential to obtain an $\mathcal O(1/k^2)$ estimate for the squared distance of the corresponding variable. Under semi-strong convexity, several methods achieve $\mathcal O(1/k^2)$ gap estimates. Examples include the non-ergodic accelerated method in \cite{HeHFCNSNS}, the splitting methods in \cite{LuoArx24}, the accelerated golden-ratio method with an ergodic gap bound in \cite{ChangJSC}, and the accelerated smoothed-gap framework in \cite{TranDinhOL}. These results show that strong convexity in one variable can be exploited to accelerate primal-dual methods. For further accelerated primal-dual algorithms under various convexity and smoothness assumptions, we refer to \cite{ChenLOSJO,KovalevGR,ThekPMLR,KhalafiBPMLR,CondatSR26} and the references therein.

Dynamical systems approaches provide a complementary way to construct and analyze accelerated primal-dual methods. In the convex-concave setting, Nesterov dynamics with vanishing damping $\alpha/t$ have been extended to saddle point problems by introducing primal-dual extrapolations to control the bilinear coupling in the energy estimates \cite{LiTac,ZengLCIFAC,SunJOTA,WangArx,LuoOpt}. Within this framework, He et al.\cite{HeHFAMO} studied second-order primal-dual dynamics with damping, scaling, and extrapolation parameters, and derived estimates for the gap and velocities. Ding et al.\ \cite{DingFVCOAP} analyzed related dynamics with vanishing damping and their discretizations, including an implicit algorithm with a non-ergodic $\mathcal O(1/k^2)$ gap rate. More recently, He and Fang \cite{HeFarXiv26} established convergence of the entire primal-dual trajectory in the differentiable convex-concave setting with damping $\alpha/t$, $\alpha\ge3$. For $\alpha>3$, they obtained an $o(1/t^2)$ gap estimate and an $o(1/t)$ velocity estimate. When both potentials are strongly convex, constant damping can be used to obtain exponential convergence rates. He and Fang \cite{HeAuto27} coupled two strongly convex Nesterov dynamics, with damping coefficients determined by the strong convexity constants of $F$ and $G$. Compatible primal-dual extrapolations yield the exponential gap rate. Their discrete method achieves the linear convergence rate. In this setting, the gap controls the squared distances of both variables to the unique saddle point.
 
 When only $F$ is strongly convex, however, the gap no longer controls the dual distance, and the constant damping construction in \cite{HeAuto27} does not directly apply. Although the convex-concave analyses in \cite{DingFVCOAP,HeFarXiv26} cover this setting, their polynomial estimates do not exploit the additional primal strong convexity. This raises the question of whether the exponential and linear gap rates can be retained with strong convexity on only one side, together with convergence of both variables. To address this question, we combine strongly convex Nesterov dynamics for the primal variable with scaled heavy-ball dynamics for the dual variable. Suitable primal-dual extrapolations allow these two mechanisms to share a common energy estimate, while a further analysis establishes convergence of the full trajectory and sequence even when the dual solution is nonunique.

\subsection{Main contributions}

Combining strongly convex Nesterov dynamics \eqref{eq:intro-nag} with scaled heavy-ball dynamics \eqref{eq:intro-hb}, we develop accelerated primal-dual methods that exploit primal strong convexity through damping, dual scaling, and velocity extrapolation. Our main contributions are as follows.

\begin{enumerate}[label=(\roman*),leftmargin=*]

\item
\textbf{Accelerated continuous-time primal-dual dynamical system.} We propose a second-order primal-dual dynamical system with constant damping and dual scaling $\beta(t)$. For a positive, continuously differentiable, nondecreasing scale satisfying $\dot\beta(t)\le\sqrt{\mu}\beta(t)$ and $\beta(t)\to+\infty$, we establish global well-posedness, an $\mathcal O(1/\beta(t))$ primal-dual gap estimate, an $\mathcal O(1/\sqrt{\beta(t)})$ primal distance estimate, and convergence of the entire trajectory to a saddle point. The saddle residual satisfies the same $\mathcal O(1/\sqrt{\beta(t)})$ estimate under globally Lipschitz gradients. Exponential scaling yields the gap rate $\mathcal O(e^{-\sqrt{\mu}t})$, improving on the polynomial continuous-time gap estimates in \cite{DingFVCOAP,HeFarXiv26} by exploiting primal strong convexity, without requiring dual strong convexity as in \cite{HeAuto27}.

\item
\textbf{Inertial accelerated primal-dual algorithm.}
We derive an inertial accelerated primal-dual algorithm by time discretization and coefficient adjustment. For smooth strongly convex $F$ and possibly nonsmooth convex $G$, suitable stepsize and scaling conditions yield a non-ergodic $\mathcal O(1/\beta_k)$ primal-dual gap estimate, $\mathcal O(1/\sqrt{\beta_k})$ primal distance and saddle residual estimates, and convergence of the entire sequence, even when the dual solution is nonunique. Geometric scaling gives the linear gap rate $\mathcal O((1-\sqrt{\mu s})^k)$, compared with the polynomial rate under partial strong convexity in \cite{HeHFCNSNS,LuoArx24,TranDinhOL,ChangJSC,DingFVCOAP}. Unlike \cite{HeAuto27}, this conclusion requires neither smoothness nor strong convexity of $G$. 
\end{enumerate}

\subsection{Organization of the paper}
Section~\ref{sec:conti} develops the dynamical system and its convergence results. Section~\ref{sec:discrete} derives the discrete algorithm and proves the corresponding gap estimates, residual bounds, and iterate convergence. Section~\ref{sec:num} presents numerical experiments to illustrate the theoretical results and evaluate the performance of the proposed methods, and Section~\ref{sec:conclusion} concludes the paper.

\section{Continuous-Time Dynamical System}\label{sec:conti}

In this section, we construct an inertial accelerated primal-dual dynamical system for \eqref{ques_main} and establish its energy estimate, global existence, convergence rates, and trajectory convergence.

\subsection{Preliminaries and dynamical systems}

We begin with the assumptions on the objective functions and the properties of saddle points needed for the subsequent analysis.

\begin{assumption}\label{ass:cont}
The function $F:\R^n\to\R$ is $\mu$-strongly convex with
$\mu>0$, and $G:\R^m\to\R$ is convex. Both functions are
continuously differentiable with locally Lipschitz gradients. The saddle set  of~\eqref{ques_main} is nonempty.
\end{assumption}

All vector norms are Euclidean, matrix norms are the
corresponding induced operator norms, and $A^\trans$
denotes the transpose of $A$. A pair $(x^*,y^*)\in\R^n\times\R^m$ is a saddle point
of $\mathcal L$ in \eqref{ques_main} if
\begin{equation}\label{eq:S-ineqs}
 \mathcal L(x^*,y)\le\mathcal L(x^*,y^*)
 \le\mathcal L(x,y^*)
 \qquad
 \forall (x,y)\in\R^n\times\R^m.
\end{equation}
We denote the set of saddle points by $\cS$ and assume
throughout this section that $\cS\ne\varnothing$.
Under Assumption~\ref{ass:cont}, these inequalities are
equivalent to
\begin{equation}\label{eq:kkt1}
 \nabla F(x^*)+A^\trans y^*=0,
 \qquad
 \nabla G(y^*)-Ax^*=0.
\end{equation}
Strong convexity of $F$ ensures that all saddle points
have the same primal component $x^*$. Hence
\[
 \cS=\{x^*\}\times Y^*,
 \qquad
 Y^*:=\{y\in\R^m:(x^*,y)\in\cS\}.
\]
The dual component need not be unique. For any $u,v\in Y^*$,
\eqref{eq:kkt1} gives $A^\trans(u-v)=0$ and
$\nabla G(u)=\nabla G(v)=Ax^*$. By convexity of $G$,
\[
 \ip{Ax^*}{u-v}
 =\ip{\nabla G(v)}{u-v}
 \le G(u)-G(v)
 \le\ip{\nabla G(u)}{u-v}
 =\ip{Ax^*}{u-v}.
\]
Consequently,
\begin{equation}\label{eq:Sdiff}
 A^\trans(u-v)=0,
 \qquad
 G(u)-G(v)=\ip{Ax^*}{u-v}.
\end{equation}
These identities will be used to compare energies associated with different dual saddle points.

For $(x^*,y^*)\in\cS$, define the fixed primal-dual gap
\begin{equation}\label{eq:Sgap}
 \Gap(x,y):=\mathcal L(x,y^*)-\mathcal L(x^*,y).
\end{equation}
Its primal and dual contributions are
\begin{equation}\label{eq_defDFG}
 \begin{split}
 D_F(x)
 &:=F(x)-F(x^*)+\ip{A(x-x^*)}{y^*},\\
 D_G(y)
 &:=G(y)-G(y^*)-\ip{Ax^*}{y-y^*}.
 \end{split}
\end{equation}
By \eqref{eq:kkt1} and Assumption \ref{ass:cont},
$D_F(x)\ge\frac{\mu}{2}\norm{x-x^*}^2$ and $D_G(y)\ge0$.
Consequently,
\begin{equation}\label{eq:gap}
 \Gap(x,y)=D_F(x)+D_G(y)
 \ge\frac{\mu}{2}\norm{x-x^*}^2.
\end{equation}
Moreover, \eqref{eq:Sdiff} shows that
$D_F$, $D_G$, and $\Gap$ are independent of the choice
of $y^*\in Y^*$.

The asymmetric saddle structure of \eqref{ques_main} guides the construction
of the dynamics. We use the strongly convex Nesterov damping $2\sqrt\mu$ in the primal equation and the scaled heavy-ball mechanism in the dual equation. The primal energy involves $\sqrt\mu(x-x^*)+\dot x$. Matching velocity extrapolations with coefficient $1/\sqrt\mu$ make the bilinear terms cancel in a common energy estimate. This gives the following inertial primal-dual dynamical system:
\begin{equation}\label{eq:ode}
 \left\{
 \begin{aligned}
 &\ddot x(t)+2\sqrt{\mu}\,\dot x(t)+\nabla F(x(t))
   +A^\trans\left(
       y(t)+\frac{1}{\sqrt{\mu}}\dot y(t)
     \right)=0,\\
 &\ddot y(t)+\sqrt{\mu}\,\dot y(t)
   +\beta(t)\left[
       \nabla G(y(t))
       -A\left(
           x(t)+\frac{1}{\sqrt{\mu}}\dot x(t)
         \right)
     \right]=0.
 \end{aligned}
 \right.
\end{equation}
Here $t\ge t_0\ge0$, and the initial positions and velocities are arbitrary. The dual damping is $\sqrt\mu$, and $\beta(t)$ is a positive scale whose conditions are stated below. When $A=0$, the equations reduce to   \eqref{eq:intro-hb} and \eqref{eq:intro-nag}.  

\subsection{Energy estimates and global existence}

We now make the energy balance underlying the construction
precise. Let $x^*$ be the unique primal solution of
\eqref{ques_main}. For any $y^*\in Y^*$, define
\begin{equation}\label{eq:cE}
 \cE_{y^*}(t)
 =\beta(t)\Gap(x(t),y(t))+\frac{\beta(t)}{2}
 \norm{\sqrt{\mu}(x(t)-x^*)+\dot x(t)}^2+\frac12
 \norm{\sqrt{\mu}(y(t)-y^*)+\dot y(t)}^2.
\end{equation}
Here $\Gap(x(t),y(t))$ is the fixed-saddle primal-dual gap defined in \eqref{eq:Sgap}. The weights cancel the coupling terms in the energy derivative. 

\begin{theorem}\label{thm:cE}
Let Assumption~\ref{ass:cont} hold. Let
$\beta:[t_0,\infty)\to(0,\infty)$ be continuously
differentiable and nondecreasing, with
\begin{equation}\label{eq:cscale}
 \dot\beta(t)\le\sqrt{\mu}\,\beta(t)
 \qquad\forall t\ge t_0.
\end{equation}
Then, for arbitrary initial conditions, dynamic \eqref{eq:ode}
has a unique global solution
 $(x(t),y(t))$ with
$x(t)\in C^2([t_0,\infty);\R^n)$ and
$y(t)\in C^2([t_0,\infty);\R^m)$. Moreover, $(x(t),y(t))$ is bounded on $[t_0,+\infty)$, and for
every $y^*\in Y^*$,
\[
 \dot\cE_{y^*}(t)\le 0,\qquad \forall  t\ge t_0.
\]
\end{theorem}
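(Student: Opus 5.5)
Local existence and uniqueness come first. Rewrite \eqref{eq:ode} as a first-order system in $(x,\dot x,y,\dot y)$. The right-hand side is continuous in $t$ because $\beta$ is continuous, and it is locally Lipschitz in the state because $\nabla F$ and $\nabla G$ are locally Lipschitz. Cauchy--Lipschitz then gives a unique maximal $C^2$ solution on $[t_0,T_{\max})$. Global existence will follow once we show that $(x,\dot x,y,\dot y)$ stays bounded on $[t_0,T_{\max})$, and this bound will come from the energy inequality. So the core of the proof is to establish $\dot\cE_{y^*}\le 0$ on the maximal interval.

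For the energy derivative, fix $y^*\in Y^*$ and set $u=\sqrt\mu(x-x^*)+\dot x$ and $v=\sqrt\mu(y-y^*)+\dot y$. Using \eqref{eq:ode}, $\dot u=\sqrt\mu\dot x+\ddot x=-\sqrt\mu\dot x-\nabla F(x)-A^\trans(y+\dot y/\sqrt\mu)$ and $\dot v=-\beta[\nabla G(y)-A(x+\dot x/\sqrt\mu)]$. By \eqref{eq:kkt1}, we may replace $\nabla F(x)$ by $\nabla F(x)-\nabla F(x^*)$ at the cost of $A^\trans y^*$, and similarly for $G$. Then
\[
\dot\cE=\dot\beta\bigl(\Gap+\tfrac12\|u\|^2\bigr)+\beta\bigl(\ip{\nabla D_F(x)}{\dot x}+\ip{\nabla D_G(y)}{\dot y}\bigr)+\beta\ip{u}{\dot u}+\ip{v}{\dot v}.
\]
Expanding gives three groups of terms. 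The coupling terms with $A$ are $-\beta\ip{u}{A^\trans(y-y^*+\dot y/\sqrt\mu)}$ from the primal part and $\beta\ip{v}{A(x-x^*+\dot x/\sqrt\mu)}$ from the dual part. Since $y-y^*+\dot y/\sqrt\mu=v/\sqrt\mu$ and $x-x^*+\dot x/\sqrt\mu=u/\sqrt\mu$, these two terms cancel exactly. This cancellation is the reason for the chosen extrapolation coefficients and weights. The gradient terms from $\beta\ip{\nabla D_F}{\dot x}$ cancel against $-\beta\ip{\dot x}{\nabla D_F}$ inside $\beta\ip{u}{\dot u}$. What remains on the primal side is $-\beta\sqrt\mu\ip{x-x^*}{\nabla D_F(x)}-\beta\sqrt\mu\ip{u}{\dot x}$. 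The dual gradient terms leave $-\beta\sqrt\mu\ip{y-y^*}{\nabla D_G(y)}$, while $\beta\ip{\nabla D_G}{\dot y}$ cancels against the $\dot y$ part of $v$.

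These remainders are then estimated. Strong convexity gives $\ip{x-x^*}{\nabla D_F(x)}\ge D_F(x)+\frac\mu2\|x-x^*\|^2$, and convexity gives $\ip{y-y^*}{\nabla D_G(y)}\ge D_G(y)$. The primal velocity term satisfies $-\sqrt\mu\ip{u}{\dot x}=-\frac{\sqrt\mu}{2}(\|u\|^2+\|\dot x\|^2-\mu\|x-x^*\|^2)$. Collecting everything,
\[
\dot\cE\le(\dot\beta-\sqrt\mu\beta)\bigl(\Gap+\tfrac12\|u\|^2\bigr)-\tfrac{\sqrt\mu}{2}\beta\|\dot x\|^2+\ip{v}{\dot v}_{\text{rest}}.
\]
The dual leftover is $\ip{v}{\ddot y}$-type terms. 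Since $\dot v=\sqrt\mu\dot y+\ddot y=-\beta(\cdots)$ already carries the dual damping, I expect the residual to be exactly $0$, because the dual damping $\sqrt\mu$ matches the $\sqrt\mu$ in $v$. This step requires careful bookkeeping: the main risk is a stray term like $-\sqrt\mu\ip{v}{\dot y}$ or a sign-indefinite cross term. If such a term appears, I would add $\frac{\sqrt\mu}{2}\|y-y^*\|^2$-type slack or use the nonnegativity $\dot\beta\ge0$ to absorb it. Condition \eqref{eq:cscale} then makes the first bracket nonpositive, which yields $\dot\cE_{y^*}\le0$.

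Finally, boundedness and global existence. The energy inequality gives $\cE_{y^*}(t)\le\cE_{y^*}(t_0)$ on the maximal interval. Because $\beta$ is nondecreasing, $\beta(t)\ge\beta(t_0)>0$, so $\|u\|^2$ and $\Gap$ are bounded, and hence $\|x-x^*\|$ is bounded by \eqref{eq:gap}. Then $\dot x=u-\sqrt\mu(x-x^*)$ is bounded as well. The quantity $v$ is bounded, and the linear relation $\dot y+\sqrt\mu(y-y^*)=v$ integrates (Gronwall/variation of constants) to give bounded $y$ and hence bounded $\dot y$. A bounded state on a finite $T_{\max}$ contradicts maximality, so $T_{\max}=\infty$, and the same bounds show that $(x,y)$ is bounded on $[t_0,\infty)$.
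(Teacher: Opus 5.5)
Your proposal is correct and follows essentially the same route as the paper: Cauchy--Lipschitz for a maximal solution, exact cancellation of the $A$-coupling terms in $\beta\ip{u}{\dot u}+\ip{v}{\dot v}$, the strong convexity/convexity inequalities combined with the identity for $-\sqrt\mu\ip{u}{\dot x}$, and then boundedness of $x,\dot x$ from the energy and of $y,\dot y$ from the linear relation $\dot y+\sqrt\mu(y-y^*)=v$, which rules out finite $T_{\max}$. Your hedged dual ``rest'' term is identically zero by your own formula $\dot v=-\beta\nabla D_G(y)+\frac{\beta}{\sqrt\mu}Au$: because the dual damping equals the $\sqrt\mu$ in $v$, no $\dot y$ or $\ddot y$ terms survive, so the fallback slack is unnecessary, and you only need to add that $\Gap+\frac12\norm{u}^2\ge0$ by \eqref{eq:gap}, so that its product with $\dot\beta-\sqrt\mu\beta\le0$ is nonpositive.
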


\begin{proof}

We first establish local existence and uniqueness. Writing \eqref{eq:ode} as a first-order system in $(x(t),y(t),\dot x(t),\dot y(t))$, its vector field is continuous in time and locally Lipschitz in the state variables, uniformly on compact time intervals. Indeed, $\nabla F$ and $\nabla G$ are locally Lipschitz, $\beta(t)$ is continuous and locally bounded, and the remaining state-dependent terms are linear. By the Picard--Lindel\"of theorem and the maximal extension theorem \cite[Theorems~2.2 and~2.13]{TeschlBook}, arbitrary initial conditions determine a unique maximal solution on $[t_0,T_{\max})$, where $T_{\max}\in(t_0,\infty]$. Equation~\eqref{eq:ode} implies that $x(t)$ and $y(t)$ are twice continuously differentiable. We derive the energy estimate on this interval and then show that $T_{\max}=\infty$.

Fix $(x^*,y^*)\in\cS$. Subtracting the saddle
conditions \eqref{eq:kkt1} from
\eqref{eq:ode} gives
\begin{equation}\label{eq:diffX}
 \begin{split}
 \frac{d}{dt}
 \big(\sqrt{\mu}(x(t)-x^*)+\dot x(t)\big)
 ={}&
 -\sqrt{\mu}\,\dot x(t)
 -\big(\nabla F(x(t))-\nabla F(x^*)\big)\\
 &-\frac1{\sqrt{\mu}}A^\trans
 \big(\sqrt{\mu}(y(t)-y^*)+\dot y(t)\big),
 \end{split}
\end{equation}
and
\begin{equation*}
 \frac{d}{dt}
 \big(\sqrt{\mu}(y(t)-y^*)+\dot y(t)\big)
 =
 -\beta(t)\big(\nabla G(y(t))-\nabla G(y^*)\big)+\frac{\beta(t)}{\sqrt{\mu}}A
 \big(\sqrt{\mu}(x(t)-x^*)+\dot x(t)\big).
\end{equation*}
By the definitions of $D_F$ and $D_G$ in \eqref{eq_defDFG} and
the saddle conditions \eqref{eq:kkt1},
\[
 \begin{aligned}
 \frac{d}{dt}D_F(x(t))
 &=\ip{\nabla F(x(t))-\nabla F(x^*)}{\dot x(t)},\\
 \frac{d}{dt}D_G(y(t))
 &=\ip{\nabla G(y(t))-\nabla G(y^*)}{\dot y(t)}.
 \end{aligned}
\]
This together with \eqref{eq:diffX} gives
\begin{equation}\label{eq:diff_xsqu}
 \begin{split}
 &\frac12\frac{d}{dt}
 \norm{\sqrt{\mu}(x(t)-x^*)+\dot x(t)}^2=
 -\mu\ip{x(t)-x^*}{\dot x(t)}
 -\sqrt{\mu}\norm{\dot x(t)}^2\\
 &\qquad
 -\sqrt{\mu}
 \ip{x(t)-x^*}{\nabla F(x(t))-\nabla F(x^*)}
 -\frac{d}{dt}D_F(x(t))\\
 &\qquad
 -\frac1{\sqrt{\mu}}
 \ip{A\big(\sqrt{\mu}(x(t)-x^*)+\dot x(t)\big)}
     {\sqrt{\mu}(y(t)-y^*)+\dot y(t)}.
 \end{split}
\end{equation}
Similarly, we have
\begin{equation}\label{eq:diff_ysqu}
 \begin{split}
\frac12\frac{d}{dt}
 \norm{\sqrt{\mu}(y(t)-y^*)+\dot y(t)}^2 &=-\sqrt{\mu}\beta(t)
 \ip{y(t)-y^*}{\nabla G(y(t))-\nabla G(y^*)}
 -\beta(t)\frac{d}{dt}D_G(y(t))\\
 &\quad
 +\frac{\beta(t)}{\sqrt{\mu}}
 \ip{A\big(\sqrt{\mu}(x(t)-x^*)+\dot x(t)\big)}
     {\sqrt{\mu}(y(t)-y^*)+\dot y(t)}.
 \end{split}
\end{equation}

Strong convexity of $F$ and convexity of $G$ give
\[
 \begin{aligned}
 F(x^*)
 &\ge F(x(t))
 +\ip{\nabla F(x(t))}{x^*-x(t)}
 +\frac{\mu}{2}\norm{x(t)-x^*}^2,\\
 G(y^*)
 &\ge G(y(t))
 +\ip{\nabla G(y(t))}{y^*-y(t)}.
 \end{aligned}
\]
Combining these inequalities with \eqref{eq:kkt1} and \eqref{eq_defDFG} ,
we obtain
\begin{equation}\label{eq:c-sc}
 \begin{aligned}
 \ip{x(t)-x^*}{\nabla F(x(t))-\nabla F(x^*)}
 &\ge D_F(x(t))
 +\frac{\mu}{2}\norm{x(t)-x^*}^2,\\
 \ip{y(t)-y^*}{\nabla G(y(t))-\nabla G(y^*)}
 &\ge D_G(y(t)).
 \end{aligned}
\end{equation}

Multiplying \eqref{eq:diff_xsqu} by $\beta(t)$
and adding \eqref{eq:diff_ysqu} cancels the
coupling terms. Since
$\Gap(x(t),y(t))=D_F(x(t))+D_G(y(t))$, we obtain
\begin{equation}\label{eq:c-ene1}
 \begin{aligned}
 &\frac{\beta(t)}2\frac{d}{dt}
 \norm{\sqrt{\mu}(x(t)-x^*)+\dot x(t)}^2
 +\frac12\frac{d}{dt}
 \norm{\sqrt{\mu}(y(t)-y^*)+\dot y(t)}^2\\
 &\quad=
 -\mu\beta(t)\ip{x(t)-x^*}{\dot x(t)}
 -\sqrt{\mu}\beta(t)\norm{\dot x(t)}^2
 -\sqrt{\mu}\beta(t)
 \ip{x(t)-x^*}{\nabla F(x(t))-\nabla F(x^*)}\\
 &\qquad
 -\sqrt{\mu}\beta(t)
 \ip{y(t)-y^*}{\nabla G(y(t))-\nabla G(y^*)}
 -\beta(t)\frac{d}{dt}\Gap(x(t),y(t))\\
 &\quad\le
 -\mu\beta(t)\ip{x(t)-x^*}{\dot x(t)}
 -\sqrt{\mu}\beta(t)\norm{\dot x(t)}^2
 -\frac{\mu\sqrt{\mu}\beta(t)}2
  \norm{x(t)-x^*}^2\\
 &\qquad
 -\sqrt{\mu}\beta(t)\Gap(x(t),y(t))
 -\beta(t)\frac{d}{dt}\Gap(x(t),y(t)),
 \end{aligned}
\end{equation}
where the inequality follows from
\eqref{eq:c-sc}.
The identity
\begin{equation}\label{eq:c-ene2}
 \begin{split}
 &-\mu\ip{x(t)-x^*}{\dot x(t)}
 -\sqrt{\mu}\norm{\dot x(t)}^2
 -\frac{\mu\sqrt{\mu}}2\norm{x(t)-x^*}^2\\
 &\qquad=
 -\frac{\sqrt{\mu}}2
 \norm{\sqrt{\mu}(x(t)-x^*)+\dot x(t)}^2
 -\frac{\sqrt{\mu}}2\norm{\dot x(t)}^2
 \end{split}
\end{equation}
then gives the required  estimate.
Indeed, differentiating \eqref{eq:cE} and using
\eqref{eq:c-ene1} and
\eqref{eq:c-ene2}, we find
\[
 \begin{aligned}
 \dot\cE_{y^*}(t)
 ={}&
 \dot\beta(t)\Gap(x(t),y(t))
 +\beta(t)\frac{d}{dt}\Gap(x(t),y(t))+\frac{\dot\beta(t)}2
 \norm{\sqrt{\mu}(x(t)-x^*)+\dot x(t)}^2\\
 &+\frac{\beta(t)}2\frac{d}{dt}
 \norm{\sqrt{\mu}(x(t)-x^*)+\dot x(t)}^2
 +\frac12\frac{d}{dt}
 \norm{\sqrt{\mu}(y(t)-y^*)+\dot y(t)}^2\\
 \le{}&
 \big(\dot\beta(t)-\sqrt{\mu}\beta(t)\big)
 \left[
 \Gap(x(t),y(t))
 +\frac12
 \norm{\sqrt{\mu}(x(t)-x^*)+\dot x(t)}^2
 \right]-\frac{\sqrt{\mu}\beta(t)}2\norm{\dot x(t)}^2\\
  \le{}& 0,
 \end{aligned}
\]
where the last inequality follows from
\eqref{eq:cscale} and \eqref{eq:gap}.
Consequently,
\begin{equation}\label{eq:DesE}
 0\le\cE_{y^*}(t)\le \cE_{y^*}(t_0)
 \qquad\forall t\in[t_0,T_{\max}).
\end{equation}

We next establish bounds independent of time.
Since $\beta$ is nondecreasing,
$\beta(t)\ge\beta(t_0)>0$.
By \eqref{eq:cE}, \eqref{eq:gap}, and
\eqref{eq:DesE},
\[
 \norm{x(t)-x^*}^2
 \le\frac{2\cE_{y^*}(t_0)}{\mu\beta(t_0)},
 \qquad
 \norm{\sqrt{\mu}(x(t)-x^*)+\dot x(t)}^2
 \le\frac{2\cE_{y^*}(t_0)}{\beta(t_0)}.
\]
Hence $x(t)$ and $\dot x(t)$ are uniformly bounded on
$[t_0,T_{\max})$.

For the dual variable, expanding the square and together with \eqref{eq:cE} and \eqref{eq:DesE} gives
\[
\frac{d}{dt}\norm{y(t)-y^*}^2
 +\sqrt{\mu}\norm{y(t)-y^*}^2=\frac1{\sqrt{\mu}}
 \norm{\sqrt{\mu}(y(t)-y^*)+\dot y(t)}^2
 -\frac1{\sqrt{\mu}}\norm{\dot y(t)}^2\le\frac{2\cE_{y^*}(t_0)}{\sqrt{\mu}}.
\]
Multiplying by $e^{\sqrt{\mu}(t-t_0)}$ and
integrating yields
\[
 \begin{aligned}
 \norm{y(t)-y^*}^2
 &\le e^{-\sqrt{\mu}(t-t_0)}
       \norm{y(t_0)-y^*}^2
 +\frac{2\cE_{y^*}(t_0)}{\mu}
  \big(1-e^{-\sqrt{\mu}(t-t_0)}\big)\\
 &\le\max\left\{
 \norm{y(t_0)-y^*}^2,\frac{2\cE_{y^*}(t_0)}{\mu}
 \right\}.
 \end{aligned}
\]
Thus $y(t)$ is uniformly bounded. Moreover,
\[
 \norm{\dot y(t)}
 \le
 \norm{\sqrt{\mu}(y(t)-y^*)+\dot y(t)}
 +\sqrt{\mu}\norm{y(t)-y^*},
\]
so $\dot y(t)$ is uniformly bounded as well.

It remains to prove that $T_{\max}=\infty$. Suppose, to the contrary, that $T_{\max}<\infty$. The function $\beta$ is bounded on $[t_0,T_{\max}]$, and the state $(x(t),y(t),\dot x(t),\dot y(t))$ is bounded by the preceding estimates. Since the gradients are continuous, \eqref{eq:ode} also bounds $\ddot x(t)$ and $\ddot y(t)$ on $[t_0,T_{\max})$. The full state is therefore Lipschitz on this interval and has a finite limit as $t\uparrow T_{\max}$. Local existence with this limit as initial data extends the solution beyond $T_{\max}$, contradicting maximality. Hence $T_{\max}=\infty$. 

The bounds obtained above are independent of time and therefore hold on $[t_0,\infty)$. This proves global existence, uniqueness, boundedness, and energy estimate.
\end{proof}

\subsection{Convergence analysis}

We now establish primal-dual gap bounds and convergence of the trajectory. Globally Lipschitz gradients also yield a saddle residual estimate.

\begin{theorem}\label{thm:convPD}
Let the assumptions of Theorem~\ref{thm:cE} hold, and let $x^*$ be the unique primal solution of \eqref{ques_main}. Suppose additionally that $\beta(t)\to+\infty$ as $t\to\infty$. Then the solution $(x(t),y(t))$ of dynamic \eqref{eq:ode} satisfies the following conclusions.

\begin{enumerate}
\item[(i)] As $t\to\infty$, $x(t)\to x^*$,  and 
for every $y^*\in Y^*$,
\begin{align*}
 \Gap(x(t),y(t))
 &=\mathcal O\left(\frac{1}{\beta(t)}\right), \qquad\norm{x(t)-x^*}^2
 =\mathcal O\left(\frac{1}{\beta(t)}\right),\\
 & \norm{\dot x(t)}+\norm{A^\trans(y(t)-y^*)}
=\mathcal O\left(\frac{1}{\sqrt{\beta(t)}}\right).
\end{align*}

\item[(ii)]
There exists $y^*\in Y^*$ such that
$
 y(t)\to y^*$ as $t\to\infty$.
 
\item[(iii)]
If, in addition, $\nabla F$ and $\nabla G$ are globally
Lipschitz, then
\[
\left\|
 \begin{pmatrix}
  \nabla F(x(t))+A^\trans y(t)\\
  \nabla G(y(t))-Ax(t)
 \end{pmatrix}\right\|
 =\mathcal O\left(\frac{1}{\sqrt{\beta(t)}}\right).
\]
\end{enumerate}
\end{theorem}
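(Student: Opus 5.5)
Part (i) comes directly from the energy bound \eqref{eq:DesE} of Theorem~\ref{thm:cE}. Since $\cE_{y^*}(t)\le\cE_{y^*}(t_0)$, we get
\[
\Gap(x(t),y(t))\le\frac{\cE_{y^*}(t_0)}{\beta(t)},\qquad
\norm{\sqrt\mu(x(t)-x^*)+\dot x(t)}^2\le\frac{2\cE_{y^*}(t_0)}{\beta(t)}.
\]
Then \eqref{eq:gap} gives $\norm{x(t)-x^*}^2\le 2\cE_{y^*}(t_0)/(\mu\beta(t))$. Subtracting, $\norm{\dot x(t)}=\mathcal O(1/\sqrt{\beta(t)})$, and since $\beta(t)\to\infty$ we obtain $x(t)\to x^*$.

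For $A^\trans(y(t)-y^*)$ I would use the first equation of \eqref{eq:ode} rewritten as \eqref{eq:diffX}. Set $u(t):=\sqrt\mu(x(t)-x^*)+\dot x(t)$ and $w(t):=\sqrt\mu(y(t)-y^*)+\dot y(t)$. Then
\[
\frac{1}{\sqrt\mu}A^\trans w=-\dot u-\sqrt\mu\,\dot x-(\nabla F(x)-\nabla F(x^*)).
\]
The last two terms are $\mathcal O(1/\sqrt\beta)$, using the local Lipschitz property of $\nabla F$ on the bounded set containing $x(t)$. Controlling $\dot u$ pointwise is awkward, so I would instead set $z(t):=A^\trans(y(t)-y^*)$, which satisfies $\dot z+\sqrt\mu z=A^\trans w$. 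Substituting gives
\[
\dot z+\sqrt\mu z=-\sqrt\mu\,\dot u+r(t),\qquad \norm{r(t)}\le C/\sqrt{\beta(t)}.
\]
So $v:=z+\sqrt\mu\,u$ satisfies $\dot v+\sqrt\mu v=\mu u+r$, and the right-hand side is $\mathcal O(1/\sqrt\beta)$. By variation of constants, $v(t)=e^{-\sqrt\mu(t-t_0)}v(t_0)+\int_{t_0}^t e^{-\sqrt\mu(t-s)}\mathcal O(\beta(s)^{-1/2})\,ds$. Condition \eqref{eq:cscale} gives $\beta(t)\le\beta(s)e^{\sqrt\mu(t-s)}$, hence $\beta(s)^{-1/2}\le\beta(t)^{-1/2}e^{\sqrt\mu(t-s)/2}$. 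The integral is therefore bounded by $C\beta(t)^{-1/2}\int_{t_0}^t e^{-\sqrt\mu(t-s)/2}ds=\mathcal O(\beta(t)^{-1/2})$. The initial term $e^{-\sqrt\mu(t-t_0)}$ is also $\mathcal O(\beta(t)^{-1/2})$, because $\beta(t)\le\beta(t_0)e^{\sqrt\mu(t-t_0)}$. Since $u=\mathcal O(\beta^{-1/2})$, it follows that $z=\mathcal O(\beta^{-1/2})$. (Earlier step: $\norm{\dot x}$ was already handled.)

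Part (ii) is the main obstacle, and I would run an Opial-type argument with a Lyapunov function that is quadratic in $y$. The energy $\cE_{y^*}$ alone does not converge usefully in $y$, because $\beta\to\infty$ multiplies terms. The plan is to show that $\lim_{t\to\infty}\norm{y(t)-y^*}$ exists for every $y^*\in Y^*$ and that every cluster point of $y(t)$ lies in $Y^*$.

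\textbf{Cluster points.} Take $y(t_k)\to\bar y$ (boundedness comes from Theorem~\ref{thm:cE}). Then $D_G(y(t_k))\to0$ by (i), so $G(\bar y)-G(y^*)-\ip{Ax^*}{\bar y-y^*}=0$. Also $A^\trans(\bar y-y^*)=0$ by (i). Hence $\mathcal L(x^*,\bar y)=\mathcal L(x^*,y^*)$, so $\bar y$ maximizes $\mathcal L(x^*,\cdot)$. Together with $A^\trans\bar y=A^\trans y^*=-\nabla F(x^*)$, this shows $(x^*,\bar y)\in\cS$.

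\textbf{Existence of the limit.} Given $y_1^*,y_2^*\in Y^*$, expand $\cE_{y_1^*}-\cE_{y_2^*}$ using \eqref{eq:Sdiff}. The gap and primal terms cancel, leaving
\[
\sqrt\mu\ip{w_{2}}{y_2^*-y_1^*}+\tfrac{\mu}{2}(\norm{y-y_1^*}^2-\norm{y-y_2^*}^2)+\ldots,
\]
that is, an expression of the form $\sqrt\mu\,\phi+\tfrac{1}{2}\dot\phi$ plus a constant, where $\phi(t)=\ip{y(t)}{y_2^*-y_1^*}$. Both energies are monotone and bounded below, so they converge, and hence $\sqrt\mu\,\phi+\dot\phi/2$ (up to constants) has a limit. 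The first-order ODE $\dot\phi+2\sqrt\mu\phi=g(t)\to g_\infty$ then forces $\phi(t)$ to converge.

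\textbf{Conclusion.} Two cluster points $\bar y_1,\bar y_2\in Y^*$ must give the same limit of $\ip{y}{\bar y_2-\bar y_1}$, which forces $\norm{\bar y_1-\bar y_2}^2=0$. This is the standard Opial-free trick with linear functionals, and it is where I expect the bookkeeping to be most delicate.

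Part (iii) follows from the KKT system \eqref{eq:kkt1}. We have $\nabla F(x)+A^\trans y=(\nabla F(x)-\nabla F(x^*))+A^\trans(y-y^*)$, which is $\mathcal O(\beta^{-1/2})$ by (i) and global Lipschitz continuity. Similarly, $\nabla G(y)-Ax=(\nabla G(y)-\nabla G(y^*))-A(x-x^*)$. For a convex $L_G$-smooth $G$, co-coercivity gives $\norm{\nabla G(y)-\nabla G(y^*)}^2\le 2L_G D_G(y)\le 2L_G\Gap=\mathcal O(1/\beta)$, which finishes the bound.
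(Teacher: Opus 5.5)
Your proposal is correct and follows the paper's proof essentially step by step. The shared steps are the energy bounds from Theorem~\ref{thm:cE}, a variation-of-constants filter for $A^\trans(y-y^*)$ controlled via $\beta(t)\le\beta(s)e^{\sqrt{\mu}(t-s)}$, the linear-functional argument $\ip{\bar y_1}{\bar y_2-\bar y_1}=\ip{\bar y_2}{\bar y_2-\bar y_1}$ for uniqueness of dual cluster points, and the co-coercivity bound $\norm{\nabla G(y)-Ax^*}^2\le 2L_G D_G(y)$ in (iii); your filter variable $A^\trans(y-y^*)+\mu(x-x^*)+\sqrt{\mu}\dot x$ differs from the paper's $A^\trans(y-y^*)+\sqrt{\mu}\dot x$ only by the already controlled term $\mu(x-x^*)$.

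The only slip is a coefficient in (ii). The energy difference equals $\mu\phi+\sqrt{\mu}\dot\phi$ plus a constant, because the quadratic remainder is the constant $\frac{\mu}{2}\norm{y_2^*-y_1^*}^2$. Your extra $\frac{\mu}{2}(\norm{y-y_1^*}^2-\norm{y-y_2^*}^2)$ double-counts the $\mu\phi$ term, so the ODE should read $\dot\phi+\sqrt{\mu}\phi=g(t)$. This is harmless, since any positive coefficient gives convergence of $\phi$. The announced Opial-type limit of $\norm{y(t)-y^*}$ is never actually needed.
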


\begin{proof}
By Theorem~\ref{thm:cE}, the solution is globally
defined and for any  $y^*\in Y^*$,
\[
 0\le\cE_{y^*}(t)\le\cE_{y^*}(t_0)
 \qquad \forall t\ge t_0.
\]

(i). By \eqref{eq:cE} and
\eqref{eq:gap},
\begin{equation}\label{eq_gapx}
 \Gap(x(t),y(t))
 \le\frac{\cE_{y^*}(t_0)}{\beta(t)},
 \qquad
 \norm{x(t)-x^*}^2
 \le\frac{2\cE_{y^*}(t_0)}{\mu\beta(t)}.
\end{equation}
These inequalities give the first two estimates
in (i). Since $\beta(t)\to+\infty$, they also
imply $x(t)\to x^*$. Moreover,
\begin{equation}\label{eq_dotx}
 \norm{\dot x(t)}
 \le
 \norm{\sqrt{\mu}(x(t)-x^*)+\dot x(t)}
 +\sqrt{\mu}\norm{x(t)-x^*}\le
 2\sqrt{\frac{2\cE_{y^*}(t_0)}{\beta(t)}}
 =\mathcal O\left(\frac{1}{\sqrt{\beta(t)}}\right).
\end{equation}

To estimate $A^\trans(y(t)-y^*)$, multiply the primal
equation in \eqref{eq:ode} by $\sqrt{\mu}$ and use
\eqref{eq:kkt1}. Rearranging gives
\begin{equation}\label{eq:cfilter}
 \begin{split}
 &\frac{d}{dt}
 \big(A^\trans(y(t)-y^*)+\sqrt{\mu}\dot x(t)\big)
 +\sqrt{\mu}
 \big(A^\trans(y(t)-y^*)+\sqrt{\mu}\dot x(t)\big)\\
 &\qquad=
 -\mu\dot x(t)
 -\sqrt{\mu}\big(\nabla F(x(t))-\nabla F(x^*)\big).
 \end{split}
\end{equation}
Choose a closed ball containing $x^*$ and the bounded
primal trajectory. Local Lipschitz continuity of
$\nabla F$ implies Lipschitz continuity on this compact
convex set, with a constant $L$ independent of time.
Consequently,
$\|\nabla F(x(t))-\nabla F(x^*)\|
\le L\|x(t)-x^*\|$ for all $t\ge t_0$. The previously established bounds on
$\norm{x(t)-x^*}$ and $\norm{\dot x(t)}$ therefore give
\begin{equation}\label{eq:cforce}
 \norm{-\mu\dot x(t)
 -\sqrt{\mu}\big(\nabla F(x(t))-\nabla F(x^*)\big)}\le
 \mu\norm{\dot x(t)}+\sqrt{\mu}L\norm{x(t)-x^*}
 \le\frac{C}{\sqrt{\beta(t)}},
\end{equation}
where $C>0$ is independent of $t$.

Multiplying \eqref{eq:cfilter} by $e^{\sqrt{\mu}t}$
and integrating from $t_0$ to $t$, we obtain
\[
 \begin{split}
 &e^{\sqrt{\mu}t}
 \big(A^\trans(y(t)-y^*)+\sqrt{\mu}\dot x(t)\big)-
 e^{\sqrt{\mu}t_0}
 \big(A^\trans(y(t_0)-y^*)+\sqrt{\mu}\dot x(t_0)\big)\\
 &\qquad
= -\int_{t_0}^{t}e^{\sqrt{\mu}s}
 \left[\mu\dot x(s)
 +\sqrt{\mu}\big(\nabla F(x(s))-\nabla F(x^*)\big)\right]\,ds.
  \end{split}
\]
Then, using
\eqref{eq:cforce}, we obtain
\[
\norm{
 e^{\sqrt{\mu}t}
 \big(A^\trans(y(t)-y^*)+\sqrt{\mu}\dot x(t)\big)
 }\le \norm{e^{\sqrt{\mu}t_0}
 \big(A^\trans(y(t_0)-y^*)+\sqrt{\mu}\dot x(t_0)\big)}+C\int_{t_0}^{t}\frac{e^{\sqrt{\mu}s}}{\sqrt{\beta(s)}}\,ds.
\]
By the triangle inequality and division by
$e^{\sqrt{\mu}t}$, this gives
\begin{equation}\label{eq:cvar}
 \norm{A^\trans(y(t)-y^*)+\sqrt{\mu}\dot x(t)}\le
 e^{-\sqrt{\mu}(t-t_0)}
 \norm{A^\trans(y(t_0)-y^*)+\sqrt{\mu}\dot x(t_0)}
+C\int_{t_0}^{t}
 \frac{e^{-\sqrt{\mu}(t-s)}}{\sqrt{\beta(s)}}\,ds.
\end{equation}

Since $\dot\beta(t)/\beta(t)\le\sqrt{\mu}$, it gives
\begin{equation}\label{eq:cratio}
 \frac{\beta(t)}{\beta(s)}
 \le e^{\sqrt{\mu}(t-s)}
 \qquad \forall t\ge s\ge t_0.
\end{equation}
Hence
\[
 \int_{t_0}^{t}
 \frac{e^{-\sqrt{\mu}(t-s)}}{\sqrt{\beta(s)}}\,ds
 \le
 \frac{1}{\sqrt{\beta(t)}}
 \int_{t_0}^{t}e^{-\frac{\sqrt{\mu}}{2}(t-s)}\,ds
 \le\frac{2}{\sqrt{\mu}\sqrt{\beta(t)}}.
\]
Taking $s=t_0$ in \eqref{eq:cratio} also yields
$e^{-\sqrt{\mu}(t-t_0)}
\le\sqrt{\beta(t_0)/\beta(t)}$.
Thus both terms on the right-hand side of
\eqref{eq:cvar} are
$\mathcal O(1/\sqrt{\beta(t)})$. Substituting these two bounds into \eqref{eq:cvar},
we conclude that
\[
\begin{aligned}
 &\norm{A^\trans(y(t)-y^*)+\sqrt{\mu}\dot x(t)}\le\frac{
 \sqrt{\beta(t_0)}
 \norm{A^\trans(y(t_0)-y^*)+\sqrt{\mu}\dot x(t_0)}
 +\frac{2C}{\sqrt{\mu}}
 }{\sqrt{\beta(t)}}=\mathcal O\left(\frac{1}{\sqrt{\beta(t)}}\right).
 \end{aligned}
 \]
Together with \eqref{eq_dotx} implies
\[
 \norm{A^\trans(y(t)-y^*)}
 \le
 \norm{A^\trans(y(t)-y^*)+\sqrt{\mu}\dot x(t)}
 +\sqrt{\mu}\norm{\dot x(t)}=\mathcal O\left(\frac{1}{\sqrt{\beta(t)}}\right),
\]
which completes the proof of (i).

 (ii). From Theorem \ref{thm:cE}, we obtain that $y(t)$ is bounded on $[t_0,+\infty)$. Let $\bar y$ be any cluster
point of $y(t)$, and choose $t_j\to+\infty$ such
that $y(t_j)\to\bar y$. It follows from \eqref{eq:gap} and \eqref{eq_gapx} that
\[
 0\le D_G(y(t))
 \le\Gap(x(t),y(t))
 \le\frac{\cE_{y^*}(t_0)}{\beta(t)}
 \to0.
\]
Thus $D_G(\bar y)=0$ by continuity. Since $D_G$
is nonnegative and differentiable, $\bar y$
is a minimizer and
\[
 \nabla D_G(\bar y)=\nabla G(\bar y)-Ax^*=0.
\]
The adjoint estimate in (i) also implies
$
 A^\trans\bar y=A^\trans y^*=-\nabla F(x^*).
$
Hence $(x^*,\bar y)$ satisfies
\eqref{eq:kkt1}, and every cluster point
of $y(t)$ belongs to $Y^*$.

To show that the cluster point is unique, take
any $u,v\in Y^*$. By
\eqref{eq:Sdiff}, the gap and the
primal part of \eqref{eq:cE} are independent
of the reference dual saddle. Expanding the
difference of the dual squares gives
\begin{equation}\label{eq:cediff}
 \begin{split}
 \cE_u(t)-\cE_v(t)
 ={}&-\sqrt{\mu}
 \ip{\sqrt{\mu}y(t)+\dot y(t)}{u-v}+\frac{\mu}{2}
 \big(\norm{u}^2-\norm{v}^2\big).
 \end{split}
\end{equation}
By Theorem~\ref{thm:cE}, both $\cE_u(t)$ and
$\cE_v(t)$ are nonnegative and nonincreasing,
and hence have finite limits. Equation~\eqref{eq:cediff}
therefore implies that, for each fixed $u,v\in Y^*$,
\begin{equation}\label{eq_limyy}
	\lim_{t\to+\infty}\ip{\sqrt{\mu}y(t)+\dot y(t)}{u-v}=\ell
\end{equation}
for some $\ell\in\R$. To deduce a limit for
$\ip{y(t)}{u-v}$, observe that
\[
 \frac{d}{dt}\ip{y(t)}{u-v}
 +\sqrt{\mu}\ip{y(t)}{u-v}
 =\ip{\sqrt{\mu}y(t)+\dot y(t)}{u-v}.
\]
Subtracting $\ell$ from the scalar equation and
multiplying by $e^{\sqrt{\mu}t}$ gives
\[
 \frac{d}{dt}\left[
 e^{\sqrt{\mu}t}
 \left(\ip{y(t)}{u-v}-\frac{\ell}{\sqrt{\mu}}\right)
 \right]
 =
 e^{\sqrt{\mu}t}
 \left(\ip{\sqrt{\mu}y(t)+\dot y(t)}{u-v}-\ell\right).
\]
Integrating over $[t_0,t]$ and dividing by
$e^{\sqrt{\mu}t}$, we obtain
\begin{equation}\label{eq:ytu}
 \begin{split}
 \ip{y(t)}{u-v}-\frac{\ell}{\sqrt{\mu}}
 ={}&e^{-\sqrt{\mu}(t-t_0)}
 \left(
 \ip{y(t_0)}{u-v}-\frac{\ell}{\sqrt{\mu}}
 \right)\\
 &+\int_{t_0}^{t}e^{-\sqrt{\mu}(t-s)}
 \left(
 \ip{\sqrt{\mu}y(s)+\dot y(s)}{u-v}-\ell
 \right)\,ds.
 \end{split}
\end{equation}
The first term on the right-hand side tends to zero.
To estimate the integral, let $\varepsilon>0$.
By \eqref{eq_limyy}, there exists $T\ge t_0$ such that
\[
 \left|
 \ip{\sqrt{\mu}y(s)+\dot y(s)}{u-v}-\ell
 \right|\le\varepsilon
 \qquad\forall s\ge T.
\]
For any $t\ge T$, split the integral in \eqref{eq:ytu}
over $[t_0,T]$ and $[T,t]$.
On the first interval,
\[
 \begin{split}
 &\left|
 \int_{t_0}^{T}e^{-\sqrt{\mu}(t-s)}
 \left(
 \ip{\sqrt{\mu}y(s)+\dot y(s)}{u-v}-\ell
 \right)\,ds
 \right|\\
 &\quad\le
 e^{-\sqrt{\mu}(t-T)}
 \int_{t_0}^{T}e^{-\sqrt{\mu}(T-s)}
 \left|
 \ip{\sqrt{\mu}y(s)+\dot y(s)}{u-v}-\ell
 \right|\,ds.
 \end{split}
\]
The last integral is finite by continuity and is
independent of $t$. Hence this contribution tends
to zero as $t\to\infty$.
On the second interval,
\[
\left|
 \int_T^t e^{-\sqrt{\mu}(t-s)}
 \left(
 \ip{\sqrt{\mu}y(s)+\dot y(s)}{u-v}-\ell
 \right)\,ds
 \right|\le
 \varepsilon\int_T^t e^{-\sqrt{\mu}(t-s)}\,ds
 =\frac{\varepsilon}{\sqrt{\mu}}
 \left(1-e^{-\sqrt{\mu}(t-T)}\right)
 \le\frac{\varepsilon}{\sqrt{\mu}}.
\]
Taking absolute values in \eqref{eq:ytu} and
letting $t\to\infty$ with $T$ fixed therefore gives
\[
 \limsup_{t\to\infty}
 \left|
 \ip{y(t)}{u-v}-\frac{\ell}{\sqrt{\mu}}
 \right|
 \le\frac{\varepsilon}{\sqrt{\mu}}.
\]
Since $\varepsilon>0$ is arbitrary, we conclude that
\begin{equation}\label{eq:cyproj}
 \lim_{t\to\infty}\ip{y(t)}{u-v}
 =\frac{\ell}{\sqrt{\mu}}.
\end{equation}

We now use this scalar limit to show that the dual trajectory has only one cluster point. Let $u$ and $v$ be any two cluster points of $y(t)$. They belong to $Y^*$ by the preceding argument. Choose sequences $t_j\to\infty$ and $s_j\to\infty$ such that $y(t_j)\to u$ and $y(s_j)\to v$. Applying \eqref{eq:cyproj} along these two sequences gives 
\[ \ip{u}{u-v} =\frac{\ell}{\sqrt{\mu}} =\ip{v}{u-v}. \] 
Subtracting the two inner products yields $\norm{u-v}^2=0$, so $u=v$. Since $y(t)$ is bounded in finite dimensions, it therefore converges to some $y^*\in Y^*$. This completes the proof of (ii).

(iii). Choose positive constants $L_F,L_G>0$ that bound the
global Lipschitz moduli of $\nabla F$ and $\nabla G$,
respectively.  Convexity of $G$ therefore gives $D_G(y)\ge0$
and $D_G(y^*)=0$. Moreover,
\begin{equation}\label{eq_DGdiff}
 \nabla D_G(y)=\nabla G(y)-Ax^*.
\end{equation}
Since the affine term does not affect gradient
differences, for any $u,v\in\R^m$,
\[
 \norm{\nabla D_G(u)-\nabla D_G(v)}
 =\norm{\nabla G(u)-\nabla G(v)}
 \le L_G\norm{u-v}.
\]
Thus $\nabla D_G$ is also globally $L_G$-Lipschitz. Then the descent lemma and \eqref{eq_DGdiff} give
\[
 D_G(v)\le D_G(y(t))
 +\ip{\nabla G(y(t))-Ax^*}{v-y(t)}
 +\frac{L_G}{2}\norm{v-y(t)}^2
 \qquad \forall v\in\R^m.
\]
Choosing
$v=y(t)-\frac{1}{L_G}\big(\nabla G(y(t))-Ax^*\big)$
and using $D_G(v)\ge0$, we obtain
\[
 0\le D_G(v)\le D_G(y(t))
 -\frac{1}{2L_G}\norm{\nabla G(y(t))-Ax^*}^2.
\]
Consequently,
\begin{equation}\label{eq:cgrad}
 \norm{\nabla G(y(t))-Ax^*}^2
 \le2L_GD_G(y(t)).
\end{equation}

To estimate the dual component of the equilibrium
residual, write
\[
 \nabla G(y(t))-Ax(t)
 =\nabla G(y(t))-Ax^*-A(x(t)-x^*).
\]
Using $\norm{a+b}^2\le2\norm{a}^2+2\norm{b}^2$,
\eqref{eq:cgrad}, and the bounds established
in the proof of (i), we obtain
\[
 \begin{aligned}
 \norm{\nabla G(y(t))-Ax(t)}^2
 &\le
 2\norm{\nabla G(y(t))-Ax^*}^2
 +2\norm{A}^2\norm{x(t)-x^*}^2\\
 &\le
 4L_GD_G(y(t))
 +2\norm{A}^2\norm{x(t)-x^*}^2\\
 &\le
 \left(4L_G+\frac{4\norm{A}^2}{\mu}\right)
 \frac{\cE_{y^*}(t_0)}{\beta(t)}.
 \end{aligned}
\]
Here we used $D_G(y(t))\le\Gap(x(t),y(t))$.

For the primal component, \eqref{eq:kkt1}
implies
\[
 \nabla F(x(t))+A^\trans y(t)
 =\nabla F(x(t))-\nabla F(x^*)
 +A^\trans(y(t)-y^*).
\]
Global Lipschitz continuity of $\nabla F$ and
the estimates in (i) therefore yield
\[
 \norm{\nabla F(x(t))+A^\trans y(t)}^2
 \le
 2L_F^2\norm{x(t)-x^*}^2
 +2\norm{A^\trans(y(t)-y^*)}^2=\mathcal O\left(\frac{1}{\beta(t)}\right).
\]
Finally, the squared Euclidean norm of the
equilibrium residual is the sum of the squared
norms of its two components. Hence
\[
 \left\| \begin{pmatrix}
  \nabla F(x(t))+A^\trans y(t)\\
  \nabla G(y(t))-Ax(t)
 \end{pmatrix}\right\|^2=\norm{\nabla F(x(t))+A^\trans y(t)}^2
 +\norm{\nabla G(y(t))-Ax(t)}^2=\mathcal O\left(\frac{1}{\beta(t)}\right).
\]
Taking square roots proves (iii).
 
\end{proof}

The gap estimate in Theorem \ref{thm:convPD} shows how the growth of $\beta(t)$
determines the convergence rate.  Condition \eqref{eq:cscale} bounds the logarithmic growth of $\beta$ by $\sqrt\mu$. Exponential scaling with this exponent gives the fastest gap bound supplied by Theorem~\ref{thm:convPD}; polynomial scales give the following alternatives.

\begin{corollary}\label{cor:cscale}
Let Assumption~\ref{ass:cont} hold. For any $\beta_0>0$, the
following estimates hold along the solution of
\eqref{eq:ode} as $t\to\infty$.
\begin{enumerate}
\item[(i)]
If $\beta(t)=\beta_0e^{\theta t}$ with
$0<\theta\le\sqrt{\mu}$, then
\[
 \Gap(x(t),y(t))=\mathcal O(e^{-\theta t}).
\]
In particular, $\theta=\sqrt{\mu}$ gives
$\Gap(x(t),y(t))=\mathcal O(e^{-\sqrt{\mu}t})$.

\item[(ii)]
If $\beta(t)=\beta_0(t+c)^p$ with
$p>0$ and $c\ge p/\sqrt{\mu}$, then
\[
 \Gap(x(t),y(t))=\mathcal O(1/t^p).
\]
\end{enumerate}
\end{corollary}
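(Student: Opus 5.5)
The plan is to reduce both parts to Theorem~\ref{thm:convPD}(i), which gives $\Gap(x(t),y(t))\le \cE_{y^*}(t_0)/\beta(t)$ for every scale that is positive, continuously differentiable, nondecreasing, satisfies \eqref{eq:cscale}, and tends to $+\infty$. It therefore suffices to check these hypotheses for each proposed $\beta$ and then read off $1/\beta(t)$. Note that Theorem~\ref{thm:cE} alone already yields the gap bound via $\beta(t)\Gap\le\cE_{y^*}(t)\le\cE_{y^*}(t_0)$, so divergence of $\beta$ only matters for the other conclusions, though it holds here anyway.

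For (i), $\beta(t)=\beta_0e^{\theta t}$ is smooth, positive, increasing and unbounded. Moreover $\dot\beta(t)=\theta\beta(t)\le\sqrt{\mu}\beta(t)$ precisely because $\theta\le\sqrt\mu$. Hence $\Gap(x(t),y(t))\le \cE_{y^*}(t_0)\beta_0^{-1}e^{-\theta t}=\mathcal O(e^{-\theta t})$, and setting $\theta=\sqrt\mu$ gives the particular case.

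For (ii), $\beta(t)=\beta_0(t+c)^p$ is positive and $C^1$ on $[t_0,\infty)$, because $t+c\ge c\ge p/\sqrt\mu>0$. It is increasing and unbounded since $p>0$. The only real step is \eqref{eq:cscale}: we have $\dot\beta(t)/\beta(t)=p/(t+c)\le p/c\le\sqrt\mu$, using $t\ge t_0\ge0$ and $c\ge p/\sqrt\mu$. Then $\Gap(x(t),y(t))\le \cE_{y^*}(t_0)\beta_0^{-1}(t+c)^{-p}\le \cE_{y^*}(t_0)\beta_0^{-1}t^{-p}$ for $t>0$, which is $\mathcal O(1/t^p)$.

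There is no genuine obstacle. The only point requiring care is the logarithmic-derivative bound in (ii), where the lower bound on $c$ is exactly what is needed, together with the nonnegativity of $t_0$ so that the bound holds on the whole interval $[t_0,\infty)$.
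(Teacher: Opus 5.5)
Your proposal is correct and takes the same approach as the paper. The paper gives no separate proof for this corollary, but it argues the discrete analogue, Corollary~\ref{cor:dscale}, the same way you do. You verify that each scale is positive, $C^1$, nondecreasing, unbounded and satisfies \eqref{eq:cscale}; in case (ii) this is via $\dot\beta(t)/\beta(t)=p/(t+c)\le p/c\le\sqrt{\mu}$ with $t\ge t_0\ge0$. You then read off $\Gap(x(t),y(t))\le\cE_{y^*}(t_0)/\beta(t)$ from Theorem~\ref{thm:convPD}(i).
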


\begin{remark}
Choosing $\beta(t)=\beta_0e^{\sqrt{\mu}t}$ in Corollary~\ref{cor:cscale} gives the exponential convergence rate $\Gap(x(t),y(t))=\mathcal O(e^{-\sqrt{\mu}t})$. For comparison, He and Fang \cite{HeAuto27} obtain the rate $\mathcal O(e^{-\min\{\sqrt{\mu_F},\sqrt{\mu_G}\}t})$ for a constant damping primal-dual dynamical system when both $F$ and $G$ are strongly convex, with moduli $\mu_F>0$ and $\mu_G>0$, respectively. Here, strong convexity is required only for $F$: the increasing dual scale allows exponential gap decay while $G$ remains merely convex. Moreover, the dual trajectory converges even when the dual solution is nonunique. Thus the result combines exponential gap decay with convergence of the primal-dual trajectory under partial strong convexity.
\end{remark}

\begin{remark} 
Taking $G(y)=\ip{b}{y}$ reduces \eqref{ques_main} to problem \eqref{ques_cons}, with Lagrangian $\mathcal L(x,y)=F(x)+\ip{Ax-b}{y}$. Consequently, the preceding results provide exponential decay of the Lagrangian gap and convergence of the multiplier trajectory for strongly convex linearly constrained optimization. This connects our analysis with the accelerated primal-dual dynamics in \cite{HeHFAUTO26}, which use a second-order primal equation coupled with a first-order dual equation. Dynamic~\eqref{eq:ode} instead retains inertia in both variables and also accommodates general convex dual potentials. When $A=0$, the two equations decouple. The primal equation becomes the strongly convex Nesterov dynamic \eqref{eq:intro-nag}, and the choice $\beta(t)=\beta_0e^{\sqrt{\mu}t}$ recovers the rate $F(x(t))-F(x^*)=\mathcal O(e^{-\sqrt{\mu}t})$ from \cite{LuoCMP,Siegel,WilsonRJJMLR}. The dual equation becomes the scaled heavy-ball dynamic \eqref{eq:intro-hb}, a special case of the inertial dynamic studied in \cite{Balhag,HeHFTAC}. The estimate $G(y(t))-\min G=\mathcal O(1/\beta(t))$ is consistent with their time-scaling analysis. Hence the proposed coupling preserves the corresponding unconstrained acceleration mechanisms. 
\end{remark}

\section{Accelerated primal-dual algorithms}\label{sec:discrete}

In this section, we develop a discrete primal-dual algorithm motivated by the dynamical system~\eqref{eq:ode}. The discrete model allows a nonsmooth dual potential, while global smoothness of the primal function permits an explicit gradient step.

\begin{assumption}\label{ass:disc}
The function $F:\R^n\to\R$ is differentiable, $\mu$-strongly convex with $\mu>0$, and has a globally $L_F$-Lipschitz gradient. The function $G:\R^m\to(-\infty,+\infty]$ is proper, closed, and convex. The saddle set $\cS\subseteq\R^n\times\dom G$ defined by \eqref{eq:S-ineqs} is nonempty.
\end{assumption}

Under Assumption~\ref{ass:disc}, the saddle inequalities are equivalent to \begin{equation}\label{eq_saddle_dis} \nabla F(x^*)+A^\trans y^*=0, \qquad Ax^*\in\partial G(y^*). \end{equation} Strong convexity of $F$ ensures that all saddle points share the same primal component $x^*$, so $\cS=\{x^*\}\times Y^*$. The subgradient inequalities yield \eqref{eq:Sdiff}. Throughout this section, $D_F$ and $D_G$ are defined in  \eqref{eq_defDFG}. The gap identity and bound in \eqref{eq:gap} remain valid, and the gap is finite for $x\in\R^n$ and $y\in\dom G$.

\subsection{Discretization and the algorithm}

To accommodate a nonsmooth dual potential, we
consider the differential inclusion corresponding
to \eqref{eq:ode}:
\begin{equation*}
 \left\{
 \begin{aligned}
 &\ddot x(t)+2\sqrt{\mu}\dot x(t)+\nabla F(x(t))
 +A^\trans\left(y(t)+\frac{1}{\sqrt{\mu}}\dot y(t)\right)=0,\\
 &0\in\ddot y(t)+\sqrt{\mu}\dot y(t)
 +\beta(t)\left[
 \partial G(y(t))
 -A\left(x(t)+\frac{1}{\sqrt{\mu}}\dot x(t)\right)
 \right].
 \end{aligned}
 \right.
\end{equation*}
With time step $\sqrt s$, set
$
 t_k=t_0+k\sqrt s$ and $\tau=\sqrt{\mu s}.
$ We split the primal damping into two equal terms
and use forward and backward differences,
respectively. For the dual damping we use a
backward difference, while the velocities in
both coupling terms are approximated by forward
differences. Define the predictors
\begin{equation}\label{eq:bars}
 \bar x_k=x_k+\frac{1-\tau}{1+\tau}(x_k-x_{k-1}),
 \qquad
 \bar y_k=y_k+(1-\tau)(y_k-y_{k-1}).
\end{equation}
Evaluating the primal gradient at $\bar x_k$ and the dual subdifferential at $y_{k+1}$ gives the finite-difference scheme

\begin{equation*}
 \left\{
 \begin{aligned}
 &\frac{x_{k+1}-2x_k+x_{k-1}}{s}
 +\sqrt{\mu}\frac{x_{k+1}-x_k}{\sqrt{s}}
 +\sqrt{\mu}\frac{x_k-x_{k-1}}{\sqrt{s}}
+\nabla F(\bar x_k)
 +A^\trans\left(y_k+\frac{y_{k+1}-y_k}{\tau}\right)=0,\\
 &0\in
 \frac{y_{k+1}-2y_k+y_{k-1}}{s}
 +\sqrt{\mu}\frac{y_k-y_{k-1}}{\sqrt{s}}+\beta(t_k)\left[
 \partial G(y_{k+1})
 -A\left(x_k+\frac{x_{k+1}-x_k}{\tau}\right)
 \right].
 \end{aligned}
 \right.
\end{equation*}
Using \eqref{eq:bars}, this becomes
\[
\left\{\begin{aligned}
 x_{k+1}&=\bar x_k-\frac{s}{1+\tau}\left[\nabla F(\bar x_k)+A^\trans\left(y_k+\frac{y_{k+1}-y_k}{\tau}\right)\right],\\
 0&\in y_{k+1}-\bar y_k+s\beta(t_k)\left[\partial G(y_{k+1})-A\left(x_k+\frac{x_{k+1}-x_k}{\tau}\right)\right].
\end{aligned}\right.
\]
We replace the primal force coefficient $s/(1+\tau)$ by $s$ to recover the strongly convex Nesterov update when $A=0$ \cite{NesterovBook}. Writing $\beta_k=s\beta(t_k)$ motivates the dual scale; the algorithm subsequently allows an independently chosen positive sequence $\beta_k$. The adjusted updates are
\begin{subequations}\label{eq:algorithm}
\begin{align}
 x_{k+1}
 &=\bar x_k-s\left[
 \nabla F(\bar x_k)
 +A^\trans\left(y_k+\frac{y_{k+1}-y_k}{\tau}\right)
 \right],
 \label{eq:disc-x}\\
 0&\in y_{k+1}-\bar y_k
 +\beta_k\left[
 \partial G(y_{k+1})
 -A\left(x_k+\frac{x_{k+1}-x_k}{\tau}\right)
 \right].
 \label{eq:disc-y}
\end{align}
\end{subequations}
The primal gradient is explicit, while the dual
potential and the extrapolated coupling are
implicit. The matching coupling terms permit
the cancellation needed in the energy analysis.

The coupled updates \eqref{eq:algorithm} can be implemented by first
solving a dual subproblem and then updating the
primal variable explicitly. Indeed, define
\begin{equation*}
 \widehat x_k
 =x_k+\frac{\bar x_k-x_k}{\tau}
 -\frac{s}{\tau}
 \big(\nabla F(\bar x_k)+A^\trans y_k\big).
\end{equation*}
Equation~\eqref{eq:disc-x} gives
\[
 x_k+\frac{x_{k+1}-x_k}{\tau}
 =\widehat x_k
 -\frac{s}{\tau^2}A^\trans(y_{k+1}-y_k).
\]
Substituting this expression into
\eqref{eq:disc-y} yields
\begin{equation}\label{eq:dopt}
 0\in\partial G(y_{k+1})
 +\frac{y_{k+1}-\bar y_k}{\beta_k}
 -A\widehat x_k
 +\frac{s}{\tau^2}AA^\trans(y_{k+1}-y_k).
\end{equation}
This is the optimality condition of the dual
minimization problem
\[
 y_{k+1}=\argmin_{y\in\R^m}\bigg\{
 G(y)+\frac{1}{2\beta_k}\norm{y-\bar y_k}^2
 -\ip{A\widehat x_k}{y}+\frac{s}{2\tau^2}
 \norm{A^\trans(y-y_k)}^2
 \bigg\}.
\]
For $\beta_k>0$, the affine lower bound of $G$ and the positive quadratic term make the objective coercive and strongly convex. It therefore has a unique minimizer. The resulting method is summarized in Algorithm~\ref{alg:pd}.  
Combining \eqref{eq:dopt} with the primal update recovers \eqref{eq:disc-y}, so the algorithm is equivalent to \eqref{eq:algorithm}.

\begin{algorithm}[htbp]
\caption{Inertial accelerated primal-dual algorithm for problem \eqref{ques_main}.}
\label{alg:pd}

\textbf{Initialization:}
Choose $x_0,x_1\in\R^n$, $y_0,y_1\in\dom G$,
and parameters
$
 0<s\le\frac{1}{L_F}$,
$
 \tau=\sqrt{\mu s}\in(0,1)$,
$
 \beta_k>0$ for all $k\ge0$.

\textbf{For $k=1,2,\ldots$:}
\begin{enumerate}
\item Compute the predictors
\[
 \bar x_k
 =x_k+\frac{1-\tau}{1+\tau}(x_k-x_{k-1}),\quad 
 \bar y_k
 =y_k+(1-\tau)(y_k-y_{k-1})\]
and
\[ \widehat x_k
 =x_k+\frac{\bar x_k-x_k}{\tau}
 -\frac{s}{\tau}
 \big(\nabla F(\bar x_k)+A^\trans y_k\big).
\]

\item Compute $y_{k+1}$ by solving
\[ y_{k+1}=\argmin_{y\in\R^m}\bigg\{
 G(y)+\frac{1}{2\beta_k}\norm{y-\bar y_k}^2
 -\ip{A\widehat x_k}{y}+\frac{s}{2\tau^2}
 \norm{A^\trans(y-y_k)}^2
 \bigg\}.\]

\item Update the primal variable:
\[
 x_{k+1}
 =\bar x_k-s\left[
 \nabla F(\bar x_k)
 +A^\trans\left(y_k+\frac{y_{k+1}-y_k}{\tau}\right)
 \right].
\]
\end{enumerate}

\textbf{Output:}
The primal-dual iterates $(x_k,y_k)$.
\end{algorithm}

The dual subproblem in Algorithm \eqref{alg:pd} is strongly convex and therefore has a unique solution. For many structured choices of $G$ and $A$, it admits a
closed-form solution or reduces to a linear system that can be
solved directly. Under Assumption~\ref{ass:disc} and suitable scaling
conditions, we will establish a non-ergodic primal-dual gap bound
of $\mathcal O(1/\beta_k)$. The fastest gap rate guaranteed by our
analysis is attained with the geometric scale
$\beta_k=\beta_0/(1-\tau)^k$, yielding
$\mathcal O((1-\sqrt{\mu s})^k)$ and matching the convergence rate
of Nesterov's accelerated method for strongly convex optimization \cite{NesterovBook,LinAcc}, while requiring strong convexity only of the primal potential.
 
\subsection{Energy estimates}

We now establish an energy estimate for
Algorithm~\ref{alg:pd}. Let $x^*$ be the unique
primal solution and fix any $y^*\in Y^*$.
For $k\ge1$, define
\begin{equation}\label{eq:dE}
 \cE^{y^*}_k
 =\beta_{k-1}\Gap(x_k,y_k)
 +\frac{\beta_{k-1}}{2s}
 \norm{x_k-(1-\tau)x_{k-1}-\tau x^*}^2+\frac12
 \norm{y_k-(1-\tau)y_{k-1}-\tau y^*}^2,
\end{equation}
where $\Gap$ is the fixed-saddle gap defined
in \eqref{eq:Sgap}.
The shifted differences in \eqref{eq:dE} play
the role of the  velocities in the
continuous energy. As before, the primal terms
and the gap share the same weight, while the
dual term remains unscaled.

\begin{lemma}\label{lem:dE}
Let Assumption~\ref{ass:disc} hold, and let
$\{(x_k,y_k)\}_{k\ge 0}$ be generated by Algorithm~\ref{alg:pd}.
Suppose that
\begin{equation}\label{eq:dscale}
 (1-\tau)\beta_k\le\beta_{k-1}
 \qquad \forall k\ge1.
\end{equation}
Then, for every $y^*\in Y^*$ and $k\ge1$, $\cE^{y^*}_k$ is nonnegative and
\begin{equation*}
 \cE^{y^*}_{k+1}-\cE^{y^*}_k
 \le{}0.
\end{equation*}
\end{lemma}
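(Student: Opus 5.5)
We mirror the continuous proof of Theorem~\ref{thm:cE} in discrete form. Nonnegativity of $\cE^{y^*}_k$ is immediate from \eqref{eq:gap} and $\beta_{k-1}>0$. For the decrease, introduce the discrete ``velocity'' variables
\[
 u_k:=x_k-(1-\tau)x_{k-1}-\tau x^*,\qquad w_k:=y_k-(1-\tau)y_{k-1}-\tau y^*,
\]
so that $u_k=\tau(x_{k-1}-x^*)+(x_k-x_{k-1})$ plays the role of $\sqrt s(\sqrt\mu(x-x^*)+\dot x)$. The predictor for $y$ gives $\bar y_k-(1-\tau)y_k-\tau y^*=w_k$ shifted appropriately, and the primal predictor was chosen so that the Nesterov identity holds. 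Expressed in these variables, \eqref{eq:disc-x} and \eqref{eq:disc-y} should read
\[
 u_{k+1}=(\text{linear combination of }u_k,\ \bar x_k-x^*)-s\big(\nabla F(\bar x_k)-\nabla F(x^*)\big)-\tfrac{s}{\tau}A^\trans w_{k+1},
\]
\[
 w_{k+1}-w_k\in-\beta_k\big(\partial G(y_{k+1})-Ax^*\big)+\tfrac{\beta_k}{\tau}A u_{k+1}
 \quad\text{(up to } \tau\text{-shifts)}.
\]
Here the saddle conditions \eqref{eq_saddle_dis} are used to subtract $A^\trans y^*$ and $Ax^*$. The key identities are $x_k+(x_{k+1}-x_k)/\tau-x^*=u_{k+1}/\tau$ and $y_k+(y_{k+1}-y_k)/\tau-y^*=w_{k+1}/\tau$.

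\textbf{Primal part.} Apply the standard one-step Nesterov strongly convex estimate at $\bar x_k$, using $D_F$ in place of $f-f^*$. With $s\le 1/L_F$, the descent lemma and $\mu$-strong convexity of $D_F$ (evaluated at both $x_k$ and $x^*$, with weights $1-\tau$ and $\tau$) give
\[
 D_F(x_{k+1})+\tfrac{1}{2s}\|u_{k+1}\|^2\le (1-\tau)\Big[D_F(x_k)+\tfrac{1}{2s}\|u_k\|^2\Big]-\tfrac1\tau\ip{A u_{k+1}}{w_{k+1}}\cdot\tfrac{1}{1}.
\]
The coupling term arises because the primal force contains the extra term $A^\trans(y_k+(y_{k+1}-y_k)/\tau-y^*)=A^\trans w_{k+1}/\tau$. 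It is paired with $x_{k+1}-(1-\tau)x_k-\tau x^*=u_{k+1}$ through the three-point identity. Multiply this inequality by $\beta_k$.

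\textbf{Dual part.} Use the subgradient inequality of $G$ at $y_{k+1}$ against the point $(1-\tau)y_k+\tau y^*$; convexity of $D_G$ then gives
\[
 \ip{\xi_{k+1}}{w_{k+1}}\ge D_G(y_{k+1})-(1-\tau)D_G(y_k),
\]
where $\xi_{k+1}\in\partial G(y_{k+1})-Ax^*$. Combine this with $\tfrac12\|w_{k+1}\|^2-\tfrac12\|w_k\|^2\le\ip{w_{k+1}-w_k}{w_{k+1}}$, which yields
\[
 \tfrac12\|w_{k+1}\|^2-\tfrac12\|w_k\|^2\le-\beta_k\big[D_G(y_{k+1})-(1-\tau)D_G(y_k)\big]+\tfrac{\beta_k}{\tau}\ip{Au_{k+1}}{w_{k+1}}.
\]
Adding the two parts cancels the coupling terms exactly, as in the continuous case. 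This leaves
\[
 \beta_k\Big[\Gap_{k+1}+\tfrac1{2s}\|u_{k+1}\|^2\Big]+\tfrac12\|w_{k+1}\|^2\le(1-\tau)\beta_k\Big[\Gap_k+\tfrac1{2s}\|u_k\|^2\Big]+\tfrac12\|w_k\|^2.
\]
By \eqref{eq:dscale} and nonnegativity of the bracket, the right-hand side is at most $\cE^{y^*}_k$, so $\cE^{y^*}_{k+1}\le\cE^{y^*}_k$.

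\textbf{Main obstacle.} The hard step is checking that the dual recursion really produces $w_{k+1}-w_k$. By \eqref{eq:bars}, $y_{k+1}-\bar y_k=w_{k+1}-w_k$ requires $\tau$-weighted bookkeeping: $\bar y_k=y_k+(1-\tau)(y_k-y_{k-1})$ gives $y_{k+1}-\bar y_k=(w_{k+1}-w_k)+\tau(y_k-y^*)$-type residuals. A leftover term $-\tau\ip{y_k-y^*}{w_{k+1}}$-like piece may need to be absorbed using the $-\tfrac12\|w_{k+1}-w_k\|^2$ slack, or by verifying the identity $w_{k+1}-w_k=y_{k+1}-\bar y_k$ directly. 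Similarly, one must check that replacing $s/(1+\tau)$ by $s$ in the primal step still makes the Nesterov estimate close with the factor $(1-\tau)$. I would verify both identities first, by direct expansion, before assembling the argument.
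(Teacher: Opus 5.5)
Your route is the paper's proof. You use the same velocity variables $u_k,w_k$ and energy. The primal one-step bound comes from the descent lemma and $\mu$-strong convexity of $D_F$ at $\bar x_k$, tested against $x_k$ and $x^*$ with weights $1-\tau$ and $\tau$. The dual bound comes from a subgradient inequality plus the three-point identity. Multiplying the primal bound by $\beta_k$ cancels $\pm\frac{\beta_k}{\tau}\langle Au_{k+1},w_{k+1}\rangle$, and you finish with \eqref{eq:dscale}. Your dual variant (one subgradient inequality at $(1-\tau)y_k+\tau y^*$ followed by Jensen) is equivalent to the paper's two weighted subgradient inequalities at $y^*$ and $y_k$. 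The gap is in execution. The primal inequality, which carries all the nontrivial work, is only asserted as ``standard''. The two checks you defer to your last paragraph are left open, and one of them comes with an incorrect guess.

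Both checks succeed.

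\textbf{Dual side.} There is no residual. Since $\bar y_k-(1-\tau)y_k-\tau y^*=\tau(y_k-y^*)+(1-\tau)(y_k-y_{k-1})=w_k$, you get $y_{k+1}-\bar y_k=w_{k+1}-w_k$ exactly. So \eqref{eq:disc-y} reads $w_{k+1}-w_k\in-\beta_k(\partial G(y_{k+1})-Ax^*)+\frac{\beta_k}{\tau}Au_{k+1}$ with no $\tau$-shift, and your dual part goes through verbatim.

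\textbf{Primal side.} You cannot simply quote the textbook Nesterov estimate, because the step is perturbed by $\frac{s}{\tau}A^\trans w_{k+1}$. The sufficient-decrease bound $D_F(x_{k+1})\le D_F(\bar x_k)-\frac s2\norm{\nabla D_F(\bar x_k)}^2$ is therefore no longer available. Use instead the bound valid for arbitrary $x_{k+1}$ and $x$:
$D_F(x_{k+1})-D_F(x)\le\langle\nabla D_F(\bar x_k),x_{k+1}-x\rangle+\frac1{2s}\norm{x_{k+1}-\bar x_k}^2-\frac\mu2\norm{\bar x_k-x}^2$.
Substitute $\nabla D_F(\bar x_k)=-\frac1s(x_{k+1}-\bar x_k)-\frac1\tau A^\trans w_{k+1}$. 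Weight $x=x_k$ and $x=x^*$ by $1-\tau$ and $\tau$, and apply the three-point identity. Besides the coupling term, this leaves
$\frac1{2s}\norm{\bar x_k-(1-\tau)x_k-\tau x^*}^2-\frac1{2s}\norm{u_{k+1}}^2-\frac{\mu\tau}{2}\norm{\bar x_k-x^*}^2-\frac{\mu(1-\tau)}{2}\norm{\bar x_k-x_k}^2$.

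What makes this close with factor $1-\tau$ is the identity $\bar x_k-(1-\tau)x_k-\tau x^*=(1-\tau)u_k+\tau^2(\bar x_k-x^*)$. It uses exactly the momentum $\frac{1-\tau}{1+\tau}$, via $(1+\tau)(\bar x_k-x_k)=(1-\tau)(x_k-x_{k-1})$. Convexity of $\norm{\cdot}^2$ then bounds the first square by $\frac{1-\tau}{2s}\norm{u_k}^2+\frac{\tau^3}{2s}\norm{\bar x_k-x^*}^2$. Because $\tau^2=\mu s$, we have $\frac{\tau^3}{2s}=\frac{\mu\tau}{2}$, which cancels the strong-convexity term; the last term is nonpositive and is discarded. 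This also settles your worry about the coefficient adjustment, since the computation uses only the update with step $s$, $s\le 1/L_F$, and $\tau^2=\mu s$. With these two verifications supplied, your argument is complete and coincides with the paper's.
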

 
\begin{proof}
Fix $k\ge1$ and $y^*\in Y^*$.
We first derive a one-step estimate for the
primal part and then combine it with the dual update.

By  Assumption~\ref{ass:disc}, $D_F$ is
$\mu$-strongly convex and has an
$L_F$-Lipschitz gradient, with
\begin{equation}\label{eq_DDF}
 \nabla D_F(x)=\nabla F(x)+A^\top  y^*,
 \qquad D_F(x^*)=0.
\end{equation}
The descent lemma at $\bar x_k$ gives
\[
 D_F(x_{k+1})
 \le D_F(\bar x_k)
 +\ip{\nabla D_F(\bar x_k)}{x_{k+1}-\bar x_k}
 +\frac{L_F}{2}\norm{x_{k+1}-\bar x_k}^2.
\]
For any $x\in\R^n$, strong convexity gives
\[
 D_F(x)
 \ge D_F(\bar x_k)
 +\ip{\nabla D_F(\bar x_k)}{x-\bar x_k}
 +\frac{\mu}{2}\norm{x-\bar x_k}^2.
\]
Subtracting the second inequality from the first and using $L_F\le1/s$, we obtain
\begin{equation}\label{eq:done}
 \begin{split}
D_F(x_{k+1})-D_F(x) &\le
 \ip{\nabla D_F(\bar x_k)}{x_{k+1}-x}
 +\frac{L_F}{2}\norm{x_{k+1}-\bar x_k}^2
 -\frac{\mu}{2}\norm{\bar x_k-x}^2\\
 &\le
 \ip{\nabla D_F(\bar x_k)}{x_{k+1}-x}
 +\frac{1}{2s}\norm{x_{k+1}-\bar x_k}^2
 -\frac{\mu}{2}\norm{\bar x_k-x}^2.
 \end{split}
\end{equation}
To express the gradient through the algorithm,
\eqref{eq:disc-x} gives
\[
 \nabla F(\bar x_k)
 =-\frac{x_{k+1}-\bar x_k}{s}
 -A^\trans\left(
 y_k+\frac{y_{k+1}-y_k}{\tau}
 \right).
\]
This together with \eqref{eq_DDF} implies
\begin{equation}\label{eq:dxstep}
 \begin{aligned}
 \nabla D_F(\bar x_k)&=-\frac{x_{k+1}-\bar x_k}{s}
 -A^\trans\left(
 y_k-y^*+\frac{y_{k+1}-y_k}{\tau}
 \right)\\
 &=-\frac{x_{k+1}-\bar x_k}{s}
 -\frac{1}{\tau}A^\trans
 \big(y_{k+1}-(1-\tau)y_k-\tau y^*\big).
 \end{aligned}
\end{equation}
Substituting \eqref{eq:dxstep} into
\eqref{eq:done} yields
\[
 \begin{split}
 D_F(x_{k+1})-D_F(x)
 \le{}&
 -\frac{1}{s}
 \ip{x_{k+1}-\bar x_k}{x_{k+1}-x}
 +\frac{1}{2s}\norm{x_{k+1}-\bar x_k}^2-\frac{\mu}{2}\norm{\bar x_k-x}^2\\
 &-\frac{1}{\tau}
 \ip{A(x_{k+1}-x)}
     {y_{k+1}-(1-\tau)y_k-\tau y^*}.
 \end{split}
\]
Using
$x_{k+1}-x=(x_{k+1}-\bar x_k)+(\bar x_k-x)$
in the first inner product, we obtain, for
every $x\in\R^n$,
\begin{equation}\label{eq:dstep}
 \begin{split}
 D_F(x_{k+1})-D_F(x)
 \le{}&
 -\frac{1}{s}
 \ip{x_{k+1}-\bar x_k}{\bar x_k-x}
 -\frac{1}{2s}\norm{x_{k+1}-\bar x_k}^2-\frac{\mu}{2}\norm{\bar x_k-x}^2\\
 &-\frac{1}{\tau}
 \ip{A(x_{k+1}-x)}
     {y_{k+1}-(1-\tau)y_k-\tau y^*}.
 \end{split}
\end{equation}
Apply \eqref{eq:dstep} with $x=x_k$ and
$x=x^*$, multiply the resulting inequalities
by $1-\tau$ and $\tau$, respectively, and
add them. Since $D_F(x^*)=0$, this gives
\begin{equation}\label{eq:dF}
 \begin{split}
D_F(x_{k+1})-(1-\tau)D_F(x_k) &\le
 -\frac{1}{s}
 \ip{x_{k+1}-\bar x_k}
     {\bar x_k-(1-\tau)x_k-\tau x^*}
 -\frac{1}{2s}\norm{x_{k+1}-\bar x_k}^2\\
 &\quad
 -\frac{\mu}{2}
 \left[
 (1-\tau)\norm{\bar x_k-x_k}^2
 +\tau\norm{\bar x_k-x^*}^2
 \right]\\
 &\quad
 -\frac{1}{\tau}
 \ip{A\big(x_{k+1}-(1-\tau)x_k-\tau x^*\big)}
     {y_{k+1}-(1-\tau)y_k-\tau y^*}.
 \end{split}
\end{equation}

Using the identity
$-\ip{a}{b}
=\tfrac12(\norm{a}^2+\norm{b}^2-\norm{a+b}^2)$,
we can rewrite the first two terms on the
right-hand side of \eqref{eq:dF} as 
\begin{equation}\label{eq:dxkin}
 \begin{split}
 &-\frac{1}{s}
 \ip{x_{k+1}-\bar x_k}
     {\bar x_k-(1-\tau)x_k-\tau x^*}
 -\frac{1}{2s}\norm{x_{k+1}-\bar x_k}^2\\
 &\quad=
 \frac{1}{2s}
 \norm{\bar x_k-(1-\tau)x_k-\tau x^*}^2
 -\frac{1}{2s}
 \norm{x_{k+1}-(1-\tau)x_k-\tau x^*}^2.
 \end{split}
\end{equation}
Recall from \eqref{eq:bars} that
$ \bar x_k-x_k
 =\frac{1-\tau}{1+\tau}(x_k-x_{k-1}),
$
hence
\[
 (1-\tau)(x_k-x_{k-1})
 =(1+\tau)(\bar x_k-x_k),
\]
Therefore
\[
 \begin{aligned}
x_k-(1-\tau)x_{k-1}-\tau x^* &=
 \tau(x_k-x^*)+(1-\tau)(x_k-x_{k-1})\\
 &=
 \tau(x_k-x^*)+(1+\tau)(\bar x_k-x_k)\\
 &=
 \tau(\bar x_k-x^*)+(\bar x_k-x_k).
 \end{aligned}
\]
and then
\begin{equation}\label{eq_barxdiff}
	 \begin{aligned}
 &\bar x_k-(1-\tau)x_k-\tau x^*=
 (1-\tau)(\bar x_k-x_k)
 +\tau(\bar x_k-x^*)\\
 &\quad=
 (1-\tau)
 \big[x_k-(1-\tau)x_{k-1}-\tau x^*
      -\tau(\bar x_k-x^*)\big]
 +\tau(\bar x_k-x^*)\\
 &\quad=
 (1-\tau)
 \big(x_k-(1-\tau)x_{k-1}-\tau x^*\big)
 +\tau^2(\bar x_k-x^*).
 \end{aligned}
\end{equation}
By convexity of the squared norm $\|\cdot\|^2$ and
$\tau^2=\mu s$, from \eqref{eq_barxdiff} we obtain
\begin{equation*}
 \begin{split}
\frac{1}{2s}
 \norm{\bar x_k-(1-\tau)x_k-\tau x^*}^2 &\le\frac{1-\tau}{2s}
 \norm{x_k-(1-\tau)x_{k-1}-\tau x^*}^2
 +\frac{\tau^3}{2s}\norm{\bar x_k-x^*}^2\\
 &=
 \frac{1-\tau}{2s}
 \norm{x_k-(1-\tau)x_{k-1}-\tau x^*}^2
 +\frac{\mu\tau}{2}\norm{\bar x_k-x^*}^2.
 \end{split}
\end{equation*}
Substituting this into
\eqref{eq:dxkin}, then using \eqref{eq:dF}
and discarding the nonpositive term
$-\mu(1-\tau)\norm{\bar x_k-x_k}^2/2$,
we obtain
\begin{equation}\label{eq:dprimal}
 \begin{split}
 &D_F(x_{k+1})
 +\frac{1}{2s}
 \norm{x_{k+1}-(1-\tau)x_k-\tau x^*}^2\\
 &\quad\le
 (1-\tau)\left[
 D_F(x_k)
 +\frac{1}{2s}
 \norm{x_k-(1-\tau)x_{k-1}-\tau x^*}^2
 \right]\\
 &\qquad
 -\frac{1}{\tau}
 \ip{A\big(x_{k+1}-(1-\tau)x_k-\tau x^*\big)}
     {y_{k+1}-(1-\tau)y_k-\tau y^*}.
 \end{split}
\end{equation}

We next estimate the dual part.
By \eqref{eq:disc-y}, there exists
$\eta_{k+1}\in\partial G(y_{k+1})$ such that
\begin{equation}\label{eq:dystep}
 \eta_{k+1}-Ax^*
 =-\frac{y_{k+1}-\bar y_k}{\beta_k}
 +\frac{1}{\tau}
 A\big(x_{k+1}-(1-\tau)x_k-\tau x^*\big).
\end{equation}
Since $\eta_{k+1}-Ax^*\in\partial D_G(y_{k+1})$,
convexity of $D_G$ and $D_G(y^*)=0$ give
\[
 \begin{aligned}
 D_G(y_{k+1})
 &\le\ip{\eta_{k+1}-Ax^*}{y_{k+1}-y^*},\\
 D_G(y_{k+1})-D_G(y_k)
 &\le\ip{\eta_{k+1}-Ax^*}{y_{k+1}-y_k}.
 \end{aligned}
\]
Multiplying these inequalities by $\tau$
and $1-\tau$, respectively, and adding them,
we obtain
\[
 D_G(y_{k+1})-(1-\tau)D_G(y_k)
 \le
 \ip{\eta_{k+1}-Ax^*}
     {y_{k+1}-(1-\tau)y_k-\tau y^*}.
\]
Multiplying by $\beta_k$ and substituting
\eqref{eq:dystep} yields
\begin{equation}\label{eq:dG}
 \begin{split}
 &\beta_k
 \big(D_G(y_{k+1})-(1-\tau)D_G(y_k)\big)\le
 -\ip{y_{k+1}-\bar y_k}
     {y_{k+1}-(1-\tau)y_k-\tau y^*}\\
 &\qquad
 +\frac{\beta_k}{\tau}
 \ip{A\big(x_{k+1}-(1-\tau)x_k-\tau x^*\big)}
     {y_{k+1}-(1-\tau)y_k-\tau y^*}.
 \end{split}
\end{equation}

By the definition of $\bar y_k$ in
\eqref{eq:bars},
\[
 y_{k+1}-\bar y_k
 =\big(y_{k+1}-(1-\tau)y_k-\tau y^*\big)
 -\big(y_k-(1-\tau)y_{k-1}-\tau y^*\big).
\]
Consequently, using the identity
$
 -\ip{a-b}{a}
 =\frac12\norm{b}^2-\frac12\norm{a}^2
 -\frac12\norm{a-b}^2,
$, we obtain
\begin{equation}\label{eq:dykin}
 \begin{split}
 &-\ip{y_{k+1}-\bar y_k}
      {y_{k+1}-(1-\tau)y_k-\tau y^*}=
 \frac12\norm{y_k-(1-\tau)y_{k-1}-\tau y^*}^2
\\ &\qquad\qquad -\frac12\norm{y_{k+1}-(1-\tau)y_k-\tau y^*}^2
-\frac12\norm{y_{k+1}-\bar y_k}^2\\
 &\qquad\quad\le
 \frac12\norm{y_k-(1-\tau)y_{k-1}-\tau y^*}^2
 -\frac12\norm{y_{k+1}-(1-\tau)y_k-\tau y^*}^2.
 \end{split}
\end{equation}
Combining \eqref{eq:dG} and \eqref{eq:dykin}
gives
\begin{equation}\label{eq:ddual}
 \begin{split}
 &\beta_kD_G(y_{k+1})
 +\frac12\norm{y_{k+1}-(1-\tau)y_k-\tau y^*}^2\\
 &\quad\le
 (1-\tau)\beta_kD_G(y_k)
 +\frac12\norm{y_k-(1-\tau)y_{k-1}-\tau y^*}^2\\
 &\qquad
 +\frac{\beta_k}{\tau}
 \ip{A\big(x_{k+1}-(1-\tau)x_k-\tau x^*\big)}
     {y_{k+1}-(1-\tau)y_k-\tau y^*}.
 \end{split}
\end{equation}

Finally, multiplying \eqref{eq:dprimal}
by $\beta_k$ and adding \eqref{eq:ddual}
cancels the coupling terms.
Using $\Gap=D_F+D_G$ and \eqref{eq:dE},
we obtain
\[
\cE^{y^*}_{k+1}-\cE^{y^*}_k\le\big((1-\tau)\beta_k-\beta_{k-1}\big)
 \left[
 \Gap(x_k,y_k)
 +\frac{1}{2s}
 \norm{x_k-(1-\tau)x_{k-1}-\tau x^*}^2
 \right]\le0,
 \]
where the last inequality follows from \eqref{eq:dscale} and  \eqref{eq:gap}.
This completes the proof.
\end{proof}

\subsection{Convergence analysis}

We use the energy estimate to obtain convergence rates for the primal-dual gap, the primal error, and the saddle residual, together with convergence of the dual sequence.

\begin{theorem}\label{thm:dconv}
Suppose that all assumptions of
Lemma~\ref{lem:dE} hold and that
$\beta_k\to+\infty$.
Let $x^*$ be the unique primal solution of
\eqref{ques_main}, and let
$\{(x_k,y_k)\}_{k\ge0}$ be generated by
Algorithm~\ref{alg:pd}.
Then the following conclusions hold.
\begin{enumerate}
\item[(i)]  As $k\to\infty$, $x_k\to x^*$, and
for every $y^*\in Y^*$,
\[
 \Gap(x_k,y_k)
 =\mathcal O\left(\frac{1}{\beta_k}\right),
 \qquad
 \norm{x_k-x^*}^2
 =\mathcal O\left(\frac{1}{\beta_k}\right),
\]
and
\[
 \norm{x_k-x_{k-1}}
 +\norm{A^\trans(y_k-y^*)}
 =\mathcal O\left(\frac{1}{\sqrt{\beta_k}}\right).
\]

\item[(ii)]
There exists $y^*\in Y^*$ such that
$
 \lim_{k\to+\infty}y_k=y^*.
$

\item[(iii)]
As $k\to\infty$,
\[
 \operatorname{dist}\left(
 0,\,
 \begin{pmatrix}
 \nabla F(x_k)+A^\trans y_k\\
 \partial G(y_k)-Ax_k
 \end{pmatrix}
 \right)
 =\mathcal O\left(\frac{1}{\sqrt{\beta_k}}\right).
\]
\end{enumerate}
\end{theorem}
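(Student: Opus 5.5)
The plan is to transcribe the continuous-time proof of Theorem~\ref{thm:convPD} into discrete form, using Lemma~\ref{lem:dE} in place of Theorem~\ref{thm:cE}. By Lemma~\ref{lem:dE}, $0\le\cE^{y^*}_k\le\cE^{y^*}_1=:E$ for all $k\ge1$.

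\emph{Step (i).} The energy bound gives $\beta_{k-1}\Gap(x_k,y_k)\le E$. Condition \eqref{eq:dscale} gives $\beta_{k-1}\ge(1-\tau)\beta_k$, so $\Gap(x_k,y_k)\le E/((1-\tau)\beta_k)$. Then \eqref{eq:gap} yields $\norm{x_k-x^*}^2=\mathcal O(1/\beta_k)$, hence $x_k\to x^*$. The energy also gives $\norm{x_k-(1-\tau)x_{k-1}-\tau x^*}^2\le 2sE/\beta_{k-1}$. Write
\[
x_k-x_{k-1}=\big(x_k-(1-\tau)x_{k-1}-\tau x^*\big)-\tau(x_{k-1}-x^*).
\]
Using $\beta_{k-2}\ge(1-\tau)^2\beta_k$, this gives $\norm{x_k-x_{k-1}}=\mathcal O(1/\sqrt{\beta_k})$. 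The same bounds apply to $\bar x_k-x^*$ and to $x_{k+1}-\bar x_k$, since both are fixed linear combinations of $x_{k+1}-x^*$, $x_k-x^*$ and $x_{k-1}-x^*$.

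For the adjoint term, which I expect to be the main obstacle, I would use the discrete analogue of the filter \eqref{eq:cfilter}. Set $z_k:=A^\trans(y_k-y^*)$. Identity \eqref{eq:dxstep} gives
\[
z_{k+1}=(1-\tau)z_k+\tau r_k,\qquad r_k:=-\tau\Big(\nabla D_F(\bar x_k)+\tfrac{1}{s}(x_{k+1}-\bar x_k)\Big).
\]
Since $\norm{\nabla D_F(\bar x_k)}\le L_F\norm{\bar x_k-x^*}$, the previous bounds give $\norm{r_k}\le C/\sqrt{\beta_k}$. Unrolling,
\[
z_k=(1-\tau)^{k-1}z_1+\tau\sum_{j=1}^{k-1}(1-\tau)^{k-1-j}r_j .
\]
Iterating \eqref{eq:dscale} gives $(1-\tau)^{k-j}\le\beta_j/\beta_k$. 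Hence $(1-\tau)^{k-j}/\sqrt{\beta_j}\le(1-\tau)^{(k-j)/2}/\sqrt{\beta_k}$. The geometric sum is then bounded by a constant times $1/\sqrt{\beta_k}$, and the first term is bounded by $\sqrt{\beta_1/\beta_k}\,\norm{z_1}$ up to a constant.

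\emph{Step (ii).} The energy bounds $u_k:=y_k-(1-\tau)y_{k-1}-\tau y^*$. The relation $y_k-y^*=(1-\tau)(y_{k-1}-y^*)+u_k$ then shows that $\{y_k\}$ is bounded. Let $\bar y$ be a cluster point. From $0\le D_G(y_k)\le\Gap(x_k,y_k)\to0$ and lower semicontinuity of $G$, we get $D_G(\bar y)=0$. So $\bar y$ minimizes the convex function $D_G$, i.e.\ $Ax^*\in\partial G(\bar y)$. Part (i) gives $A^\trans\bar y=A^\trans y^*=-\nabla F(x^*)$. By \eqref{eq_saddle_dis}, $\bar y\in Y^*$.

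For uniqueness, take $u,v\in Y^*$. By \eqref{eq:Sdiff}, $\cE^u_k-\cE^v_k=-\tau\ip{y_k-(1-\tau)y_{k-1}}{u-v}+c$ for a constant $c$. Both energies are monotone and bounded below, so they converge, and therefore $q_k:=\ip{y_k-(1-\tau)y_{k-1}}{u-v}$ converges to some $\ell$. The scalar recursion $p_k=(1-\tau)p_{k-1}+q_k$ for $p_k=\ip{y_k}{u-v}$ then gives $p_k\to\ell/\tau$; this is the discrete version of the $\varepsilon$-splitting argument. Applying this along subsequences converging to two cluster points $u,v$ gives $\ip{u-v}{u-v}=0$, as in the continuous case, so $y_k$ converges to a point of $Y^*$.

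\emph{Step (iii).} Take $\eta_k\in\partial G(y_k)$ from the dual update \eqref{eq:disc-y} at index $k-1$, in the form \eqref{eq:dystep}. Then
\[
\eta_k-Ax_k=-\frac{y_k-\bar y_{k-1}}{\beta_{k-1}}+\frac1\tau A\big(x_k-(1-\tau)x_{k-1}-\tau x^*\big)-A(x_k-x^*).
\]
The first term is $\mathcal O(1/\beta_k)$ because $y$ is bounded, and this is $\mathcal O(1/\sqrt{\beta_k})$ since $\beta_k\to\infty$. The other two terms are $\mathcal O(1/\sqrt{\beta_k})$ by step (i). The primal residual satisfies
\[
\norm{\nabla F(x_k)+A^\trans y_k}\le L_F\norm{x_k-x^*}+\norm{A^\trans(y_k-y^*)}=\mathcal O(1/\sqrt{\beta_k})
\]
by step (i), which completes (iii).
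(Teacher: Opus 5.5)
Your proposal is correct and follows the paper's proof step by step: energy bounds for (i), a discrete filter unrolled with $(1-\tau)^{k-j}\le\beta_j/\beta_k$ for the adjoint term, boundedness plus the energy-difference and scalar-recursion argument for (ii), and the subgradient supplied by the dual update for (iii). The main variation is that you filter $A^\trans(y_k-y^*)$ directly rather than the paper's $A^\trans(y_k-y^*)+\frac{\tau}{s}(x_k-x_{k-1})$. This works because the energy at index $k+1$ has weight $\beta_k$ and so gives $\norm{x_{k+1}-x^*}=\mathcal O(1/\sqrt{\beta_k})$ directly; use this raw bound rather than $\mathcal O(1/\sqrt{\beta_{k+1}})$, since \eqref{eq:dscale} does not make $\beta_k$ monotone. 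There is also a harmless slip: \eqref{eq:dxstep} gives $z_{k+1}=(1-\tau)z_k+\tau r_k$ with $r_k=-\big(\nabla D_F(\bar x_k)+\tfrac1s(x_{k+1}-\bar x_k)\big)$, i.e. without the extra factor $\tau$ you put in $r_k$.
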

\begin{proof}
\textup{(i)} Fix $y^*\in Y^*$.
By Lemma~\ref{lem:dE},
$0\le\cE^{y^*}_k\le\cE^{y^*}_1$ for all $k\ge1$.
Nonnegativity of the terms in \eqref{eq:dE},
together with \eqref{eq:gap} and
$
 \frac1{\beta_{k-1}}
 \le\frac1{(1-\tau)\beta_k},
$
therefore gives
\[
 \Gap(x_k,y_k)
 =\mathcal O\left(\frac1{\beta_k}\right),
 \qquad
 \norm{x_k-x^*}^2
 =\mathcal O\left(\frac1{\beta_k}\right),
\]
as well as
\[
 \norm{x_k-(1-\tau)x_{k-1}-\tau x^*}
 =\mathcal O\left(\frac1{\sqrt{\beta_k}}\right).
\]
Using the identity
\[
 (1-\tau)(x_k-x_{k-1})
 =\left[x_k-(1-\tau)x_{k-1}-\tau x^*\right]
 -\tau(x_k-x^*)
\]
and $0<\tau<1$, we obtain
\begin{equation}\label{eq:dxrate}
 \norm{x_k-x_{k-1}}
 =\mathcal O\left(\frac1{\sqrt{\beta_k}}\right).
\end{equation}
This together with \eqref{eq:bars} implies
\begin{equation}\label{eq:dxbar}
 \norm{\bar x_k-x^*}\le\norm{x_k-x^*}
 +\frac{1-\tau}{1+\tau}\norm{x_k-x_{k-1}}=\mathcal O\left(\frac1{\sqrt{\beta_k}}\right).
\end{equation}

To estimate $A^\trans(y_k-y^*)$, multiply
\eqref{eq:disc-x} by $\tau/s$ and use
\eqref{eq_saddle_dis}. This gives
\[
 A^\trans(y_{k+1}-y^*)
 =(1-\tau)A^\trans(y_k-y^*)
 -\frac{\tau}{s}(x_{k+1}-\bar x_k)-\tau\big(\nabla F(\bar x_k)-\nabla F(x^*)\big).
\]
Substituting
$
 x_{k+1}-\bar x_k
 =(x_{k+1}-x_k)
 -\frac{1-\tau}{1+\tau}(x_k-x_{k-1})
$
and collecting terms yields
\begin{equation}\label{eq:dfilter}
 \begin{split}
 &A^\trans(y_{k+1}-y^*)
 +\frac{\tau}{s}(x_{k+1}-x_k)=
 (1-\tau)\left[
 A^\trans(y_k-y^*)
 +\frac{\tau}{s}(x_k-x_{k-1})
 \right]\\
 &\qquad
 -\frac{\tau^2(1-\tau)}{s(1+\tau)}
 (x_k-x_{k-1})
 -\tau\big(\nabla F(\bar x_k)-\nabla F(x^*)\big).
 \end{split}
\end{equation}
By \eqref{eq:dxrate}, \eqref{eq:dxbar},
and Lipschitz continuity of $\nabla F$,
\[
 \begin{split}
\left\|
 \frac{\tau^2(1-\tau)}{s(1+\tau)}
 (x_k-x_{k-1})
 +\tau\big(\nabla F(\bar x_k)-\nabla F(x^*)\big)
 \right\| &\le
 \frac{\tau^2(1-\tau)}{s(1+\tau)}
 \norm{x_k-x_{k-1}}
 +\tau L_F\norm{\bar x_k-x^*}\\
 &\le\frac{C}{\sqrt{\beta_k}}
 \end{split}
\]
for some constant $C>0$ independent of $k$.
Iterating \eqref{eq:dfilter} consequently gives
\begin{equation}\label{eq:dfbound}
 \begin{split}
 &\norm{
 A^\trans(y_{k+1}-y^*)
 +\frac{\tau}{s}(x_{k+1}-x_k)}\\
 &\quad\le
 (1-\tau)^k
 \norm{
 A^\trans(y_1-y^*)
 +\frac{\tau}{s}(x_1-x_0)}
 +C\sum_{j=1}^k
 \frac{(1-\tau)^{k-j}}{\sqrt{\beta_j}}.
 \end{split}
\end{equation}
Repeated application of \eqref{eq:dscale} gives
\begin{equation}\label{eq:dratio}
 \frac{\beta_{k+1}}{\beta_j}
 \le\frac1{(1-\tau)^{k+1-j}}
 \qquad\forall\,1\le j\le k+1.
\end{equation}
We use this inequality to estimate both terms
on the right-hand side of \eqref{eq:dfbound}.
For the sum, we have
\[
 \begin{aligned}
 \sum_{j=1}^k
 \frac{(1-\tau)^{k-j}}{\sqrt{\beta_j}}
 &=
 \frac1{\sqrt{\beta_{k+1}}}
 \sum_{j=1}^k(1-\tau)^{k-j}
 \sqrt{\frac{\beta_{k+1}}{\beta_j}}\\
 &\le
 \frac1{\sqrt{1-\tau}\sqrt{\beta_{k+1}}}
 \sum_{j=1}^k(\sqrt{1-\tau})^{k-j}\\
 &\le
 \frac1{
 \sqrt{1-\tau}(1-\sqrt{1-\tau})
 \sqrt{\beta_{k+1}}}.
 \end{aligned}
\]
For the first term of \eqref{eq:dfbound},
\eqref{eq:dratio} with $j=1$ and $\tau\in(0,1)$ implies
\[
 (1-\tau)^k
 \le  (1-\tau)^{k/2}
 \le
 \frac{\sqrt{\beta_1}}{\sqrt{\beta_{k+1}}}.
\]
Substitution into \eqref{eq:dfbound} therefore
yields
\[
 \norm{
 A^\trans(y_{k+1}-y^*)
 +\frac{\tau}{s}(x_{k+1}-x_k)}
 =\mathcal O\left(\frac1{\sqrt{\beta_{k+1}}}\right).
\]
Using the triangle inequality and
\eqref{eq:dxrate} at index $k+1$, we obtain
\[
 \norm{A^\trans(y_{k+1}-y^*)}\le\norm{
 A^\trans(y_{k+1}-y^*)
 +\frac{\tau}{s}(x_{k+1}-x_k)}
 +\frac{\tau}{s}\norm{x_{k+1}-x_k}=\mathcal O\left(\frac1{\sqrt{\beta_{k+1}}}\right).
 \]
Equivalently,
\begin{equation}\label{eq:dadj}
 \norm{A^\trans(y_k-y^*)}
 =\mathcal O\left(\frac1{\sqrt{\beta_k}}\right).
\end{equation}
Together with \eqref{eq:dxrate}, this proves (i).

\medskip
\textup{(ii)} By Lemma~\ref{lem:dE} and
\eqref{eq:dE},
\[
 \norm{y_k-(1-\tau)y_{k-1}-\tau y^*}
 \le\sqrt{2\cE^{y^*}_1}.
\]
Using
\[
 y_k-y^*
 =(1-\tau)(y_{k-1}-y^*)
 +\big(y_k-(1-\tau)y_{k-1}-\tau y^*\big)
\]
and iterating, we obtain
\begin{equation}\label{eq:dybd}
 \norm{y_k-y^*}
 \le
 (1-\tau)^k\norm{y_0-y^*}
 +\sqrt{2\cE^{y^*}_1}
 \sum_{j=0}^{k-1}(1-\tau)^j\le
 \norm{y_0-y^*}
 +\frac{\sqrt{2\cE^{y^*}_1}}{\tau}.
\end{equation}
Thus $\{y_k\}_{k\ge0}$ is bounded.

Let $y_{k_j}\to\bar y$ along a subsequence
with $k_j\to\infty$.
Since $\beta_k\to+\infty$, part~(i) gives
\[
 0\le D_G(y_k)\le\Gap(x_k,y_k)\to0.
\]
Lower semicontinuity of $D_G$ therefore implies
\[
 0\le D_G(\bar y)
 \le\liminf_{j\to\infty}D_G(y_{k_j})=0.
\]
Thus $\bar y$ minimizes $D_G$, so
$
 0\in\partial D_G(\bar y)
 =\partial G(\bar y)-Ax^*.
$
In addition, \eqref{eq:dadj} implies
$A^\trans\bar y=A^\trans y^*=-\nabla F(x^*)$.
By \eqref{eq_saddle_dis}, every cluster point of
$\{y_k\}_{k\ge1}$ belongs to $Y^*$.

To prove uniqueness of the cluster point,
take any $u,v\in Y^*$.
By \eqref{eq:Sdiff}, the gap
and the primal term in \eqref{eq:dE}
are independent of the reference dual solution.
Expanding the difference of the dual squares
gives
\begin{equation}\label{eq:dediff}
 \cE^u_k-\cE^v_k
 =-\tau\ip{y_k-(1-\tau)y_{k-1}}{u-v}
 +\frac{\tau^2}{2}
 \big(\norm{u}^2-\norm{v}^2\big).
\end{equation}
Both energies are nonnegative and nonincreasing,
and hence have finite limits.
It follows from \eqref{eq:dediff} that
\begin{equation}\label{eq:limyk}
 \lim_{k\to\infty}
 \ip{y_k-(1-\tau)y_{k-1}}{u-v}=\ell
\end{equation}
for some $\ell\in\R$.
The identity
\[
 \ip{y_k}{u-v}-\frac{\ell}{\tau}
 =(1-\tau)
 \left(\ip{y_{k-1}}{u-v}-\frac{\ell}{\tau}\right)+\ip{y_k-(1-\tau)y_{k-1}}{u-v}-\ell
\]
allows us to determine the limit of
$\ip{y_k}{u-v}$.
For any fixed $N\ge1$ and $k>N$, iterating
this identity from $N+1$ to $k$ gives
\begin{equation}\label{eq:dyproj}
 \begin{split}
 \ip{y_k}{u-v}-\frac{\ell}{\tau}
 ={}&(1-\tau)^{k-N}
 \left(\ip{y_N}{u-v}-\frac{\ell}{\tau}\right)\\
 &+\sum_{j=N+1}^k(1-\tau)^{k-j}
 \left(
 \ip{y_j-(1-\tau)y_{j-1}}{u-v}-\ell
 \right).
 \end{split}
\end{equation}
Given $\varepsilon>0$, \eqref{eq:limyk}
provides $N\ge1$ such that
\[
 \left|
 \ip{y_j-(1-\tau)y_{j-1}}{u-v}-\ell
 \right|
 \le\tau\varepsilon
 \qquad\forall j>N.
\]
Taking absolute values in \eqref{eq:dyproj}
and summing the geometric series, we obtain
\[
 \begin{aligned}
 \left|\ip{y_k}{u-v}-\frac{\ell}{\tau}\right|
 &\le
 (1-\tau)^{k-N}
 \left|\ip{y_N}{u-v}-\frac{\ell}{\tau}\right|
 +\tau\varepsilon
 \sum_{j=N+1}^k(1-\tau)^{k-j}\\
 &=
 (1-\tau)^{k-N}
 \left|\ip{y_N}{u-v}-\frac{\ell}{\tau}\right|
 +\varepsilon\big(1-(1-\tau)^{k-N}\big).
 \end{aligned}
\]
Since $0<1-\tau<1$, letting $k\to\infty$
with $N$ fixed yields
\[
 \limsup_{k\to\infty}
 \left|\ip{y_k}{u-v}-\frac{\ell}{\tau}\right|
 \le\varepsilon.
\]
As $\varepsilon>0$ is arbitrary, we conclude that
\[
 \lim_{k\to\infty}\ip{y_k}{u-v}=\frac{\ell}{\tau}.
\]

Now let $u$ and $v$ be two cluster points of
$\{y_k\}_{k\ge1}$, both of which belong to $Y^*$.
Evaluating this scalar limit along subsequences
converging to $u$ and $v$ yields
\[
 \ip{u}{u-v}=\ip{v}{u-v}.
\]
Hence $\norm{u-v}^2=0$.
The bounded sequence $\{y_k\}$ therefore has
a unique cluster point and converges to some
$y^*\in Y^*$.
Since part~(i) also gives $x_k\to x^*$,
the full primal-dual sequence converges to
$(x^*,y^*)\in\cS$.

\medskip
\textup{(iii)} For $k\ge2$, the dual update
\eqref{eq:disc-y} supplies the subgradient
\[
 \eta_k
 :=\frac{\bar y_{k-1}-y_k}{\beta_{k-1}}
 +A\left(x_{k-1}+\frac{x_k-x_{k-1}}{\tau}\right)
 \in\partial G(y_k).
\]
Subtracting $Ax_k$ gives
\[
 \eta_k-Ax_k
 =\frac{\bar y_{k-1}-y_k}{\beta_{k-1}}
 +\frac{1-\tau}{\tau}A(x_k-x_{k-1}).
\]
By \eqref{eq:dybd} and \eqref{eq:bars},
both $\{y_k\}_{k\ge1}$ and
$\{\bar y_k\}_{k\ge1}$ are bounded.
Consequently, for some constant $C_1>0$,
\begin{equation}\label{eq:ddualrate}
 \begin{split}
 \norm{\eta_k-Ax_k}
 &\le
 \frac{C_1}{\beta_{k-1}}
 +\frac{1-\tau}{\tau}\norm{A}
 \norm{x_k-x_{k-1}}\\
 &\le
 \frac{C_1}{(1-\tau)\beta_k}
 +\frac{1-\tau}{\tau}\norm{A}
 \norm{x_k-x_{k-1}}\\
 &=\mathcal O\left(\frac1{\sqrt{\beta_k}}\right),
 \end{split}
\end{equation}
where we used \eqref{eq:dscale},
\eqref{eq:dxrate}, and $\beta_k\to+\infty$.

For the primal component, part~(i),
\eqref{eq_saddle_dis}, and Lipschitz continuity
of $\nabla F$ give
\begin{equation}\label{eq:dprate}
 \begin{split}
 \norm{\nabla F(x_k)+A^\trans y_k}
 &=
 \norm{\nabla F(x_k)-\nabla F(x^*)
       +A^\trans(y_k-y^*)}\\
 &\le
 L_F\norm{x_k-x^*}
 +\norm{A^\trans(y_k-y^*)}\\
 &=\mathcal O\left(\frac1{\sqrt{\beta_k}}\right).
 \end{split}
\end{equation}
Finally, by the definition of distance and
the membership $\eta_k\in\partial G(y_k)$,
\[
 \operatorname{dist}\left(
 0,\,
 \begin{pmatrix}
 \nabla F(x_k)+A^\trans y_k\\
 \partial G(y_k)-Ax_k
 \end{pmatrix}
 \right)\le
 \left\|
 \begin{pmatrix}
 \nabla F(x_k)+A^\trans y_k\\
 \eta_k-Ax_k
 \end{pmatrix}
 \right\|=
 \mathcal O\left(\frac1{\sqrt{\beta_k}}\right),
\]
where the last estimate follows from
\eqref{eq:ddualrate} and \eqref{eq:dprate}.
This proves (iii).
\end{proof}

Condition \eqref{eq:dscale} gives $\beta_k\le\beta_0/(1-\tau)^k$. Maximal geometric growth yields a linear gap bound, while polynomial scales give the following rates.

\begin{corollary}\label{cor:dscale}
Let Assumption~\ref{ass:disc} hold, and let
$\{(x_k,y_k)\}_{k\ge0}$ be generated by
Algorithm~\ref{alg:pd}. For any $\beta_0>0$,
the following conclusions hold.
\begin{enumerate}
\item[(i)]
If $\beta_k=\beta_0/(1-\tau)^k$, then
\[
 \Gap(x_k,y_k)
 =\mathcal O\big((1-\tau)^k\big).
\]

\item[(ii)]
If
$\beta_k=\beta_0(1+k/c)^p$, where
$p>0$ and $c\ge p/\tau$, then
\[
 \Gap(x_k,y_k)=\mathcal O(1/k^p).
\]
\end{enumerate}
\end{corollary}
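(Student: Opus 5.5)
The corollary follows by applying Theorem~\ref{thm:dconv}(i), which gives $\Gap(x_k,y_k)=\mathcal O(1/\beta_k)$. That theorem requires the assumptions of Lemma~\ref{lem:dE} together with $\beta_k\to+\infty$. Since $\beta_k>0$ in both cases, the plan is to check, for each of the two scale choices, the scaling condition \eqref{eq:dscale}, namely $(1-\tau)\beta_k\le\beta_{k-1}$ for all $k\ge1$, and divergence of $\beta_k$. We then read off the rate $1/\beta_k$.

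\textbf{Case (i).} Here $\beta_k=\beta_0/(1-\tau)^k$, so
\[
(1-\tau)\beta_k=\beta_0/(1-\tau)^{k-1}=\beta_{k-1},
\]
and \eqref{eq:dscale} holds with equality. Because $\tau\in(0,1)$, we have $\beta_k\to\infty$. Theorem~\ref{thm:dconv}(i) then gives $\Gap(x_k,y_k)=\mathcal O(1/\beta_k)=\mathcal O((1-\tau)^k)$.

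\textbf{Case (ii).} Here $\beta_k=\beta_0(1+k/c)^p$, which clearly diverges. Condition \eqref{eq:dscale} is equivalent to
\[
\Big(\frac{c+k-1}{c+k}\Big)^p\ge 1-\tau .
\]
Write $x=1/(c+k)\in(0,1/c]$. Bernoulli's inequality gives $(1-x)^p\ge 1-px$ when $p\ge1$. When $0<p<1$, concavity gives $(1-x)^p\ge 1-x\ge 1-px$ for $x\le1$. So in all cases it suffices that $px\le\tau$, that is, $p/(c+k)\le\tau$. Since $c\ge p/\tau$, we get $p/(c+k)\le p/c\le\tau$, which is exactly the hypothesis. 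Finally, $1/\beta_k=\beta_0^{-1}c^p/(c+k)^p=\mathcal O(1/k^p)$.

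The only step needing care is this elementary inequality in case (ii), and it is resolved by the Bernoulli/concavity argument. The rest is direct substitution into Lemma~\ref{lem:dE} and Theorem~\ref{thm:dconv}.
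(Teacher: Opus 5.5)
Your reduction to Theorem~\ref{thm:dconv}(i) through condition \eqref{eq:dscale} and $\beta_k\to+\infty$ is the same as the paper's. Case (i) is correct, and so is the subcase $p\ge1$ of case (ii).

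The subcase $0<p<1$, however, rests on a false inequality. With $x=1/(c+k)\in(0,1)$, the chain $(1-x)^p\ge 1-x\ge 1-px$ fails at the second step, because $1-x\ge 1-px$ holds only for $p\ge1$. In fact, for $0<p<1$, concavity of $t\mapsto t^p$ gives the reverse bound $(1-x)^p\le 1-px$. So the sufficient condition $p/(c+k)\le\tau$ that you reduce to is not established.

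That condition is also genuinely too weak. Take $p=\tau=1/2$, $c=1/100$, $k=1$: then $p/(c+k)<\tau$, yet $\bigl((c+k-1)/(c+k)\bigr)^p=101^{-1/2}<1-\tau$. This example violates $c\ge p/\tau$, which shows that your argument only uses a strictly weaker consequence of the hypothesis.

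The repair is to apply the hypothesis at the shifted index. For $k\ge1$ one has $c+k-1\ge c\ge p/\tau$, hence
\[
\frac{\beta_k}{\beta_{k-1}}=\Bigl(1+\frac{1}{c+k-1}\Bigr)^p\le e^{p/(c+k-1)}\le e^{\tau}\le\frac{1}{1-\tau},
\]
using $1+y\le e^y$ and $\log(1-\tau)\le-\tau$. This is the paper's argument, and it covers every $p>0$ without a case split.

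Equivalently, in your notation, Bernoulli's inequality with the nonpositive exponent $-p$ gives $(1-x)^p=\bigl(1+\tfrac{x}{1-x}\bigr)^{-p}\ge 1-\tfrac{px}{1-x}=1-\tfrac{p}{c+k-1}\ge 1-\tau$.
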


\begin{proof}
For the geometric scale,
$(1-\tau)\beta_k=\beta_{k-1}$.
For the polynomial scale,
\[
 \frac{\beta_k}{\beta_{k-1}}
 =\left(1+\frac{1}{c+k-1}\right)^p
 \le e^{p/(c+k-1)}
 \le e^\tau
 \le\frac{1}{1-\tau},
\]
where the last inequality follows from
$\log(1-\tau)\le-\tau$.
Thus both choices satisfy \eqref{eq:dscale}
and tend to infinity. The conclusions follow
from Theorem~\ref{thm:dconv}(i).
\end{proof}

 \begin{remark}
Corollary~\ref{cor:dscale} gives linear gap
convergence under one-sided strong convexity,
compared with the polynomial estimates in
\cite{HeHFCNSNS,DingFVCOAP}, and without
the dual smoothness and strong convexity
required in \cite{HeAuto27}.
For $\mu<L_F$ and $s=1/L_F$,
$
 \Gap(x_k,y_k)
 =\mathcal O\left(
 \left(1-\sqrt{\tfrac{\mu}{L_F}}\right)^k
 \right),
$
recovering the strongly convex Nesterov rate
\cite{NesterovBook}; when $A=0$, the primal
iteration reduces to that method.
For affine $G$, the problem reduces to
linear equality constrained optimization as studied
in \cite{HeHFTAC,HeHFAUTO26},
whereas our analysis allows general
nonsmooth convex dual potentials.
It also ensures convergence of the full
sequence with possibly nonunique dual
solutions and a saddle residual bound
of $\mathcal O((\sqrt{1-\tau})^k)$,
without asserting this rate for the dual
distance.
 \end{remark}

\section{Numerical Experiments}\label{sec:num}
In this section, we compare the proposed dynamics and algorithm with existing methods using the primal-dual gap, squared distances to a reference saddle point, and saddle residual. All comparisons use identical data and initial conditions.

 \begin{figure}[!htb]
 \centering
 \begin{minipage}[t]{0.44\textwidth}
  \centering
  \includegraphics[width=\linewidth]{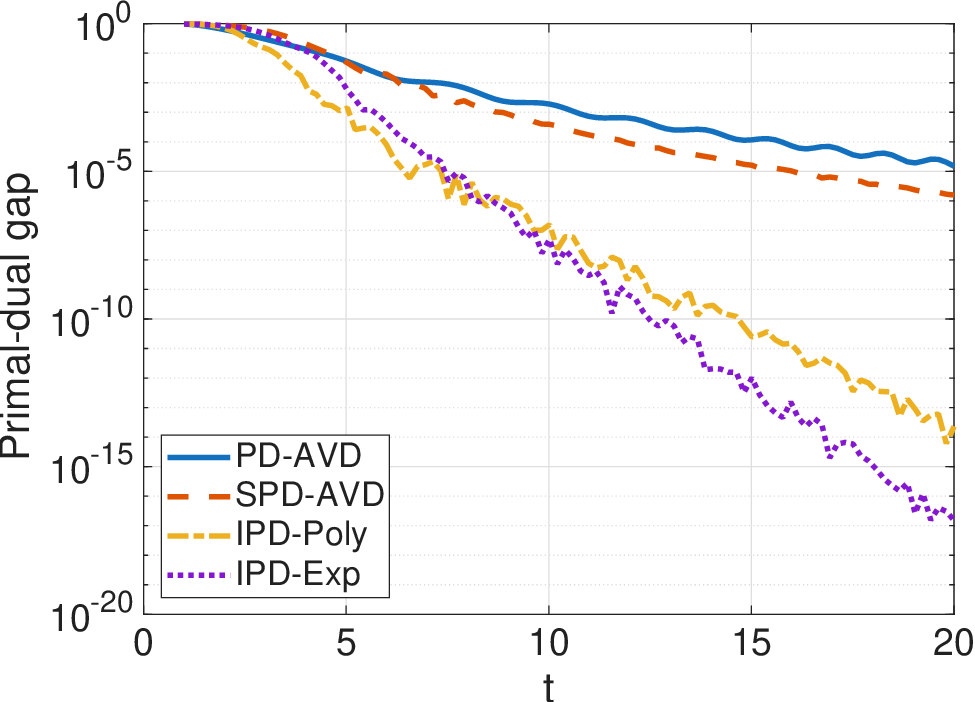}
  \par\vspace{1mm}
  {\small (a) Primal-dual gap}
 \end{minipage}\hspace{0.03\textwidth}%
 \begin{minipage}[t]{0.44\textwidth}
  \centering
  \includegraphics[width=\linewidth]{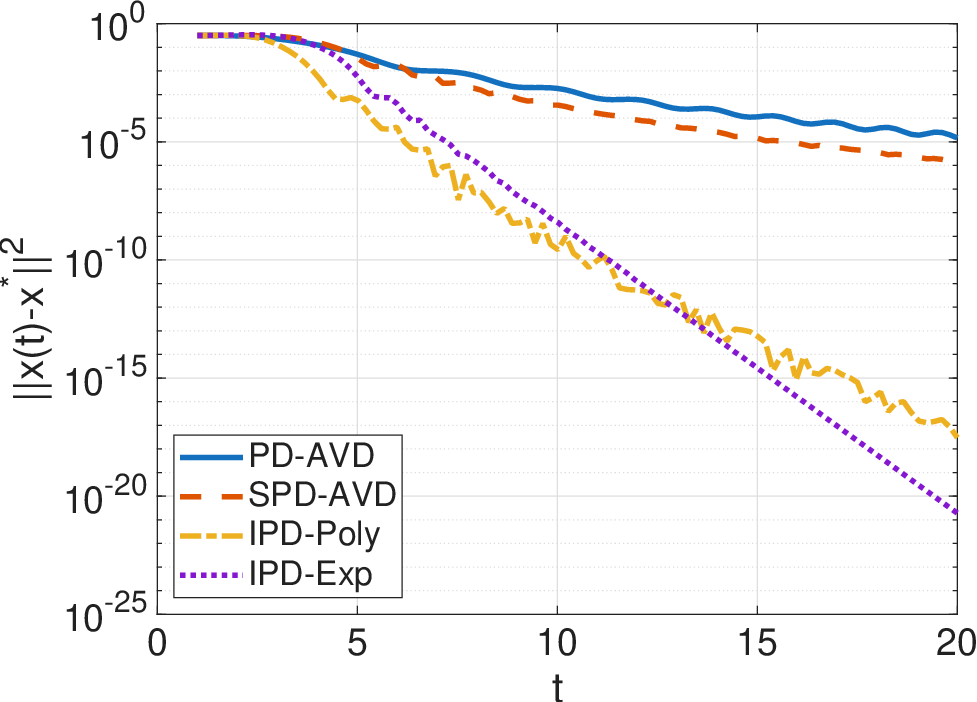}
  \par\vspace{1mm}
  {\small (b) Squared primal distance}
 \end{minipage}

 \par\vspace{2mm}
 \begin{minipage}[t]{0.44\textwidth}
  \centering
  \includegraphics[width=\linewidth]{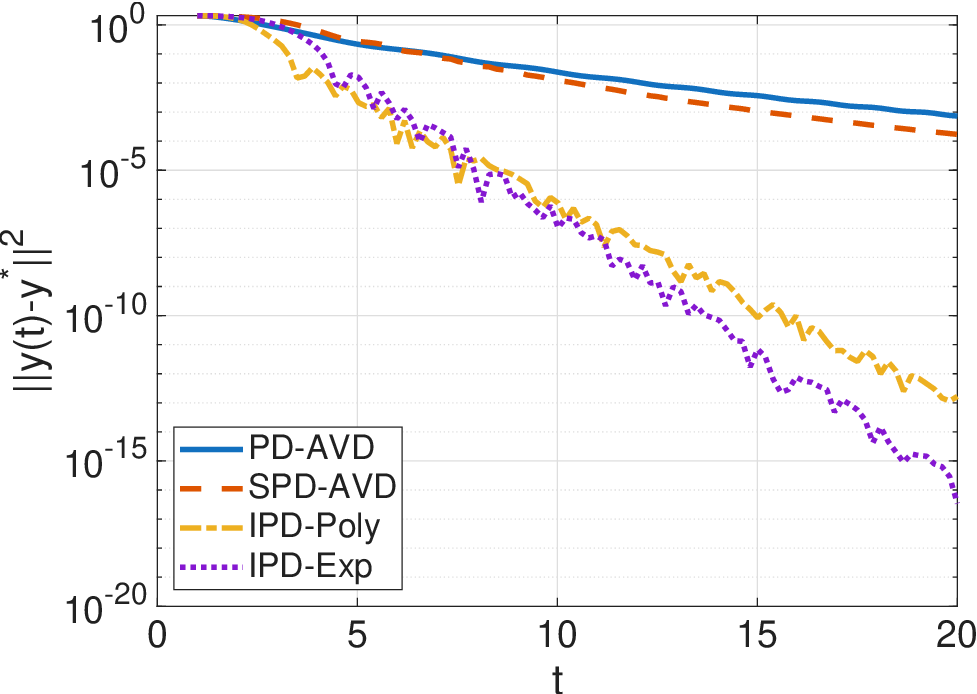}
  \par\vspace{1mm}
  {\small (c) Squared dual distance}
 \end{minipage}\hspace{0.03\textwidth}%
 \begin{minipage}[t]{0.44\textwidth}
  \centering
  \includegraphics[width=\linewidth]{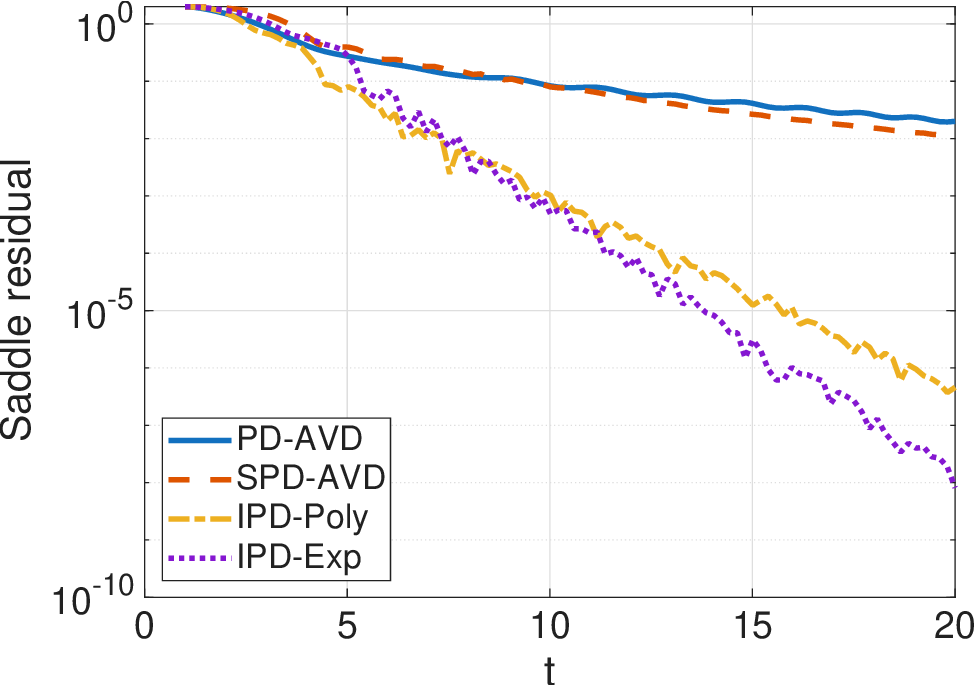}
  \par\vspace{1mm}
  {\small (d) Saddle residual}
 \end{minipage}

 \caption{Comparison of PD-AVD, SPD-AVD, IPD-Poly,
 and IPD-Exp for signal reconstruction.}
 \label{fig:signal-dynamics}
\end{figure}

\subsection{Signal reconstruction}
We consider the reconstruction of a  signal
from noisy measurements of its adjacent differences.
Let $z^\dagger\in\R^n$ denote the underlying signal and
$D\in\R^{(n-1)\times n}$ the first-order difference matrix,
defined by $(Dz)_i=z_{i+1}-z_i$.
The observations are generated as
\[
 b=Dz^\dagger+\sigma_{\mathrm{noise}}\xi,
 \qquad \xi\sim\mathcal N(0,I).
\]
We recover the signal by solving the Tikhonov-regularized
least-squares problem
\[
 \min_{z\in\R^n}
 \frac12\norm{Dz-b}^2
       +\frac{\lambda}{2}\norm{z}^2.
\]
The data-fitting term measures consistency with the
difference observations, while the regularization term
stabilizes reconstruction and selects a zero-mean
representative, since the observations do not determine
the constant component of the signal. Setting $\mu=1/\lambda$, solving the regularized
reconstruction problem is equivalent to solving
the following saddle point problem:
\[
 \min_{x\in\R^n}\max_{y\in\R^n}
 \cL(x,y)
 =\frac{\mu}{2}\norm{x}^2+\ip{x}{y}
  -\frac12\norm{Dy-b}^2.
\]
Indeed, minimizing $\cL(x,y)$ over $x$ gives the
negative of the regularized reconstruction objective.
Thus, $y^*$ is the reconstructed signal and
$x^*=-\lambda y^*$.
Here, $F$ is $\mu$-strongly convex, whereas
$G(y)=\frac12\norm{Dy-b}^2$ is smooth but not strongly
convex because $D\mathbf1=0$.
The unique saddle point is computed from
\[
 (D^\trans D+\lambda I)y^*=D^\trans b,
 \qquad x^*=-\lambda y^*.
\]

We take $n=20$, $\lambda=0.5$, and
$\sigma_{\mathrm{noise}}=0.01$, so that $\mu=2$.
The underlying signal is
$
 z_i^\dagger
 =\sin(2\pi s_i)+\frac12\sin(6\pi s_i),$ $s_i=\frac{i-1}{n-1}.
$
All dynamical systems use identical initial conditions and are
integrated over $[1,20]$. We compare four dynamical systems. The convex dynamical system in \cite{DingFVCOAP} (PD-AVD; also see \cite{ZengLCIFAC}) uses damping $\alpha/t$ and extrapolation $\theta t$, with $\alpha=6$ and $\theta=1/4$. Its scaled counterpart \cite{HeHFAMO} (SPD-AVD) uses the same parameters and $\sigma(t)=0.1t^2$ multiplying both forces. The proposed inertial primal dual dynamic \eqref{eq:ode} use scales
\[
 \beta_{\mathrm{poly}}(t)=0.1t^4\quad\text{(IPD-Poly)},
 \qquad
 \beta_{\mathrm{exp}}(t)=0.1e^{\sqrt2(t-1)}\quad\text{(IPD-Exp)}.
\]
For the polynomial scale, the condition $\dot\beta(t)\le\sqrt\mu\beta(t)$ holds for $t\ge2\sqrt2$; the theoretical estimates therefore apply after this initial interval. The exponential scale satisfies this condition with equality throughout the simulation. The respective gap bounds are $\mathcal O(1/t^2)$, $\mathcal O(1/t^4)$, $\mathcal O(1/t^4)$, and $\mathcal O(e^{-\sqrt2(t-1)})$.  We report the primal-dual gap  $\Gap(x(t),y(t))$, the squared
primal distance $\norm{x(t)-x^*}^2$ and dual distance $\norm{y(t)-y^*}^2$, and the saddle residual $
 \sqrt{
 \norm{\mu x(t)+y(t)}^2+
 \norm{D^\trans(Dy(t)-b)-x(t)}^2}.
$
 
 Figure~\ref{fig:signal-dynamics} shows faster reduction of all four errors for the proposed systems in this experiment, especially IPD-Exp. The observations are consistent with the theoretical bounds; the quadratic model can exhibit faster decay than these upper bounds. The dual-distance curve also illustrates trajectory convergence, for which the general theory does not give a distance rate.

\subsection{Image deblurring}

\begin{figure}[htbp]
 \centering
 \includegraphics[width=0.80\textwidth]{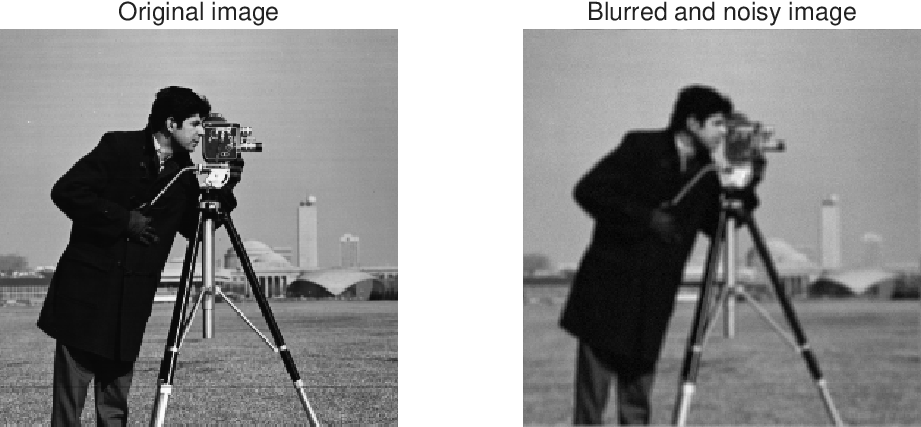}
 \caption{Original image and blurred noisy observation.}
 \label{fig:deblur-data}
\end{figure}

\begin{figure}[!htb]
 \centering
 \begin{minipage}[t]{0.44\textwidth}
  \centering
  \includegraphics[width=\linewidth]{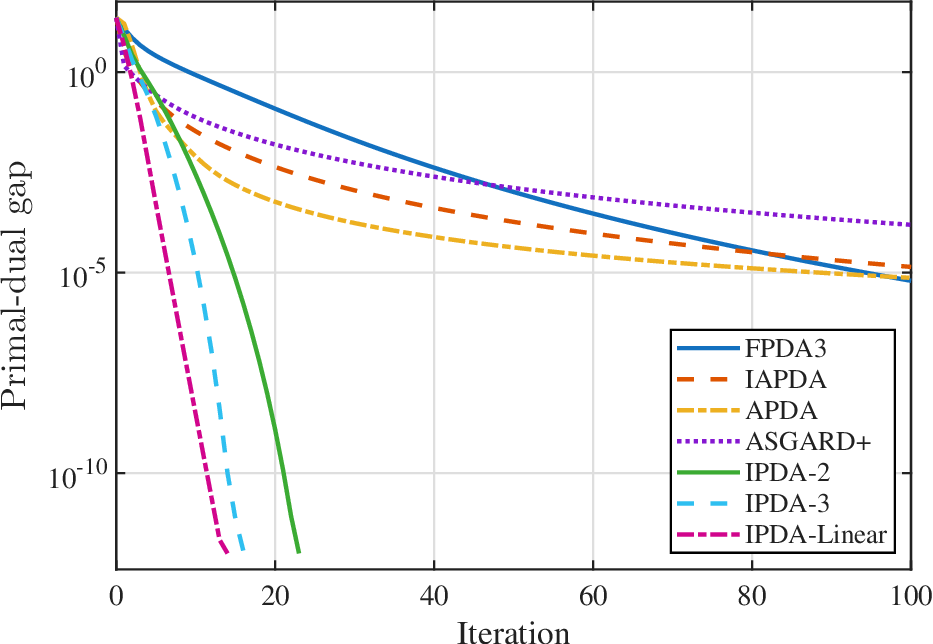}
  \par\vspace{1mm}
  {\small (a) Primal-dual gap}
 \end{minipage}\hspace{0.03\textwidth}%
 \begin{minipage}[t]{0.44\textwidth}
  \centering
  \includegraphics[width=\linewidth]{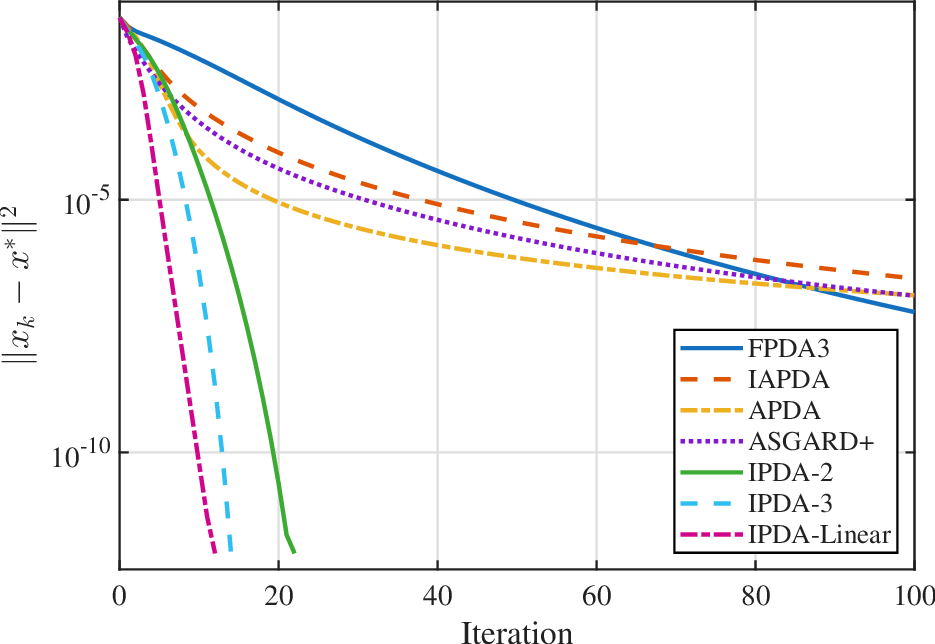}
  \par\vspace{1mm}
  {\small (b) Squared primal distance}
 \end{minipage}

 \par\vspace{2mm}
 \begin{minipage}[t]{0.44\textwidth}
  \centering
  \includegraphics[width=\linewidth]{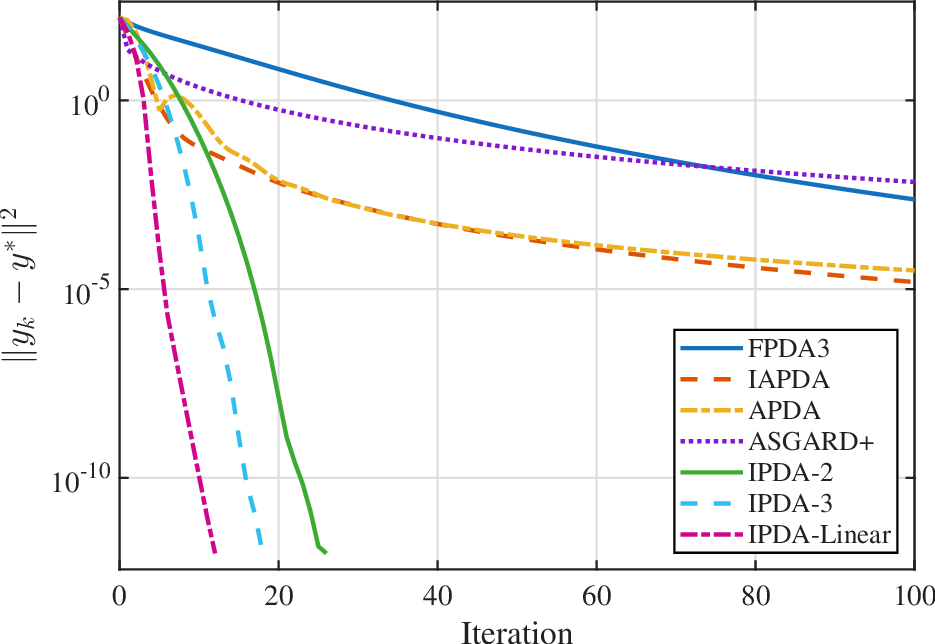}
  \par\vspace{1mm}
  {\small (c) Squared dual distance}
 \end{minipage}\hspace{0.03\textwidth}%
 \begin{minipage}[t]{0.44\textwidth}
  \centering
  \includegraphics[width=\linewidth]{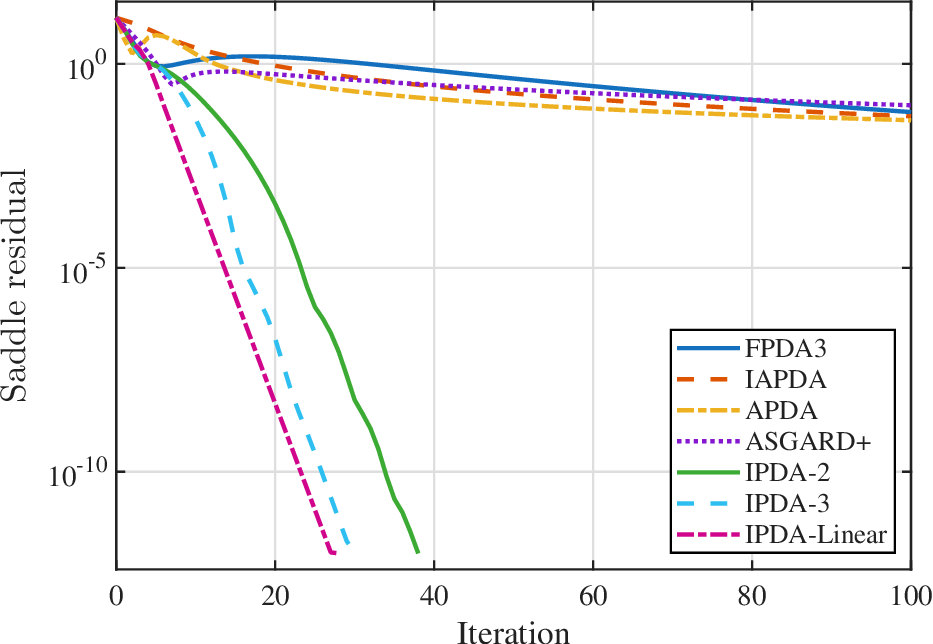}
  \par\vspace{1mm}
  {\small (d) Saddle residual}
 \end{minipage}

 \caption{Convergence comparison for image deblurring.}
 \label{fig:deblur-convergence}
\end{figure}

\begin{figure}[htbp]
 \centering
 \includegraphics[width=0.98\textwidth]{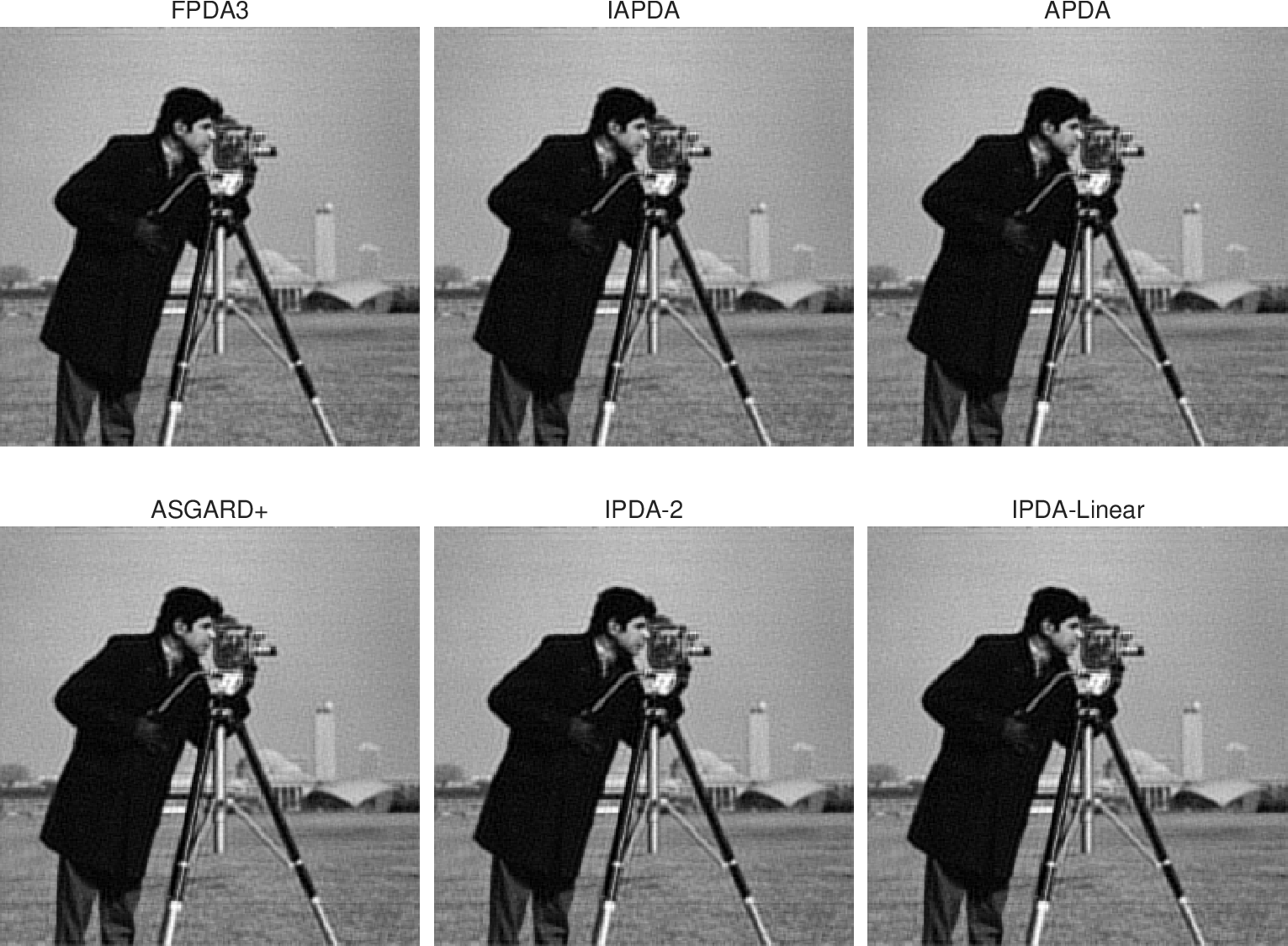}
 \caption{Reconstructed images after 100 iterations.}
 \label{fig:deblur-restoration}
\end{figure}

We consider the quadratically regularized least-squares model for image deblurring:
\[
 \min_{u\in\R^n}
 \frac12\norm{Hu-b}^2
 +\frac1{2\mu}\norm{Du}^2,
 \qquad \mu>0.
\]
Here $u$ is the image, $H$ is a periodic blur matrix, and $D:\R^n\to\R^{2n}$ is the periodic discrete image-gradient operator. The original image is converted to grayscale and normalized to $[0,1]$. We use $\mu=100$, a $4\times4$ averaging blur, and the observation $b=Hu_{\mathrm{true}}+\varepsilon$, with independent Gaussian noise of standard deviation $0.01$.

Exchanging the primal and dual roles in the standard saddle formulation and changing the sign yields the equivalent problem 
\[ 
     \min_{x\in\R^{2n}}\max_{y\in\R^n} \cL(x,y) =\frac{\mu}{2}\norm{x}^2 -\ip{D^\trans x}{y} -\frac12\norm{Hy-b}^2. 
\] 
Thus, $F(x)=\mu\norm{x}^2/2$, $G(y)=\norm{Hy-b}^2/2$, and $A=-D^\trans$. The variable $y$ represents the reconstructed image, while $x$ is an auxiliary variable. We choose a periodic averaging blur with a nontrivial kernel, so that $G$ is smooth and convex but not strongly convex.

We compare Algorithm~\ref{alg:pd} (IPDA) with the fast primal-dual algorithm \cite[Algorithm~3]{DingFVCOAP} (FPDA3), the inertial accelerated primal-dual algorithm \cite[Algorithm~1, Option~1]{HeHFCNSNS} (IAPDA), the accelerated primal-dual algorithm of Chambolle and Pock \cite{ChambollePJMIV} (APDA), and the accelerated smoothed gap reduction algorithm \cite{TranDinhOL} (ASGARD+). Let $a=\norm{A}$ and $L_G=\norm{H}^2$. For FPDA3, we use $\gamma=3/4$, $t_k=(k+5)/6$, and stepsizes $\sigma=1/(2\mu)$ and $\rho=1/(2L_G)$. For IAPDA, we apply the method to $\min_y\max_x[-\cL(x,y)]$, with proximal treatment of both potentials, and choose $\alpha_{\mathrm H}=0.45\mu/a^2$, $\beta_{\mathrm H}=2/\mu$, and $t_1=2$. APDA uses initial stepsizes $\tau_0^{\mathrm C}=\sigma_0^{\mathrm C}=0.9/a$. ASGARD+ uses initial acceleration parameter $1$ and smoothing parameter $a^2/\mu$. The remaining parameters follow the corresponding accelerated update rules. For the proposed inertial accelerated primal-dual algorithm (Algorithm \ref{alg:pd},  IPDA), we set $s=1/(2\mu)$ and $\tau=\sqrt{\mu s}=1/\sqrt2$, and consider
\[
 \beta_k^{(2)}
 =\beta_0\left(1+\frac{k}{c}\right)^2,
 \qquad
 \beta_k^{(3)}
 =\beta_0\left(1+\frac{k}{c}\right)^3,
 \qquad
 \beta_k^{(\mathrm{Linear})}
 =\frac{\beta_0}{(1-\tau)^k},
\]
with $\beta_0=1$ and $c=3/\tau$.
The three variants are denoted by
IPDA-2, IPDA-3, and IPDA-Linear, respectively,
and yield gap estimates of
$\mathcal O(1/k^2)$, $\mathcal O(1/k^3)$,
and $\mathcal O((1-\tau)^k)$.
 All methods use the same observation
and initial pair.
 
The reference saddle point is computed from
\[
 \left(H^\trans H+\frac1\mu D^\trans D\right)y^*
 =H^\trans b,
 \qquad
 x^*=\frac1\mu Dy^*.
\]
Although $H$ is singular, it preserves constant
images, while the kernel of $D$ consists of
constant images. Consequently,
$\ker H\cap\ker D=\{0\}$, and the reference
saddle point is unique. We evaluate the primal-dual gap $\Gap(x_k,y_k)$,
the squared distances to the reference
saddle point, and the saddle residual:
$\sqrt{
 \norm{\mu x_k-Dy_k}^2+
 \norm{H^\trans(Hy_k-b)+D^\trans x_k}^2}.
$ Figure~\ref{fig:deblur-data} shows the original image and
the blurred noisy observation. Figure~\ref{fig:deblur-convergence} compares the four errors against completed iterations and  Figure~\ref{fig:deblur-restoration} compares reconstructions after 100 iterations. The results show that the proposed algorithms converge faster in this experiment, particularly IPDA-Linear, consistent with Theorem~\ref{thm:dconv} and Corollary~\ref{cor:dscale}.


\subsection{Elastic-net regularization}

We consider the elastic-net problem
\begin{equation}
 \min_{x\in\mathbb R^n} \frac12\|Bx-b\|^2+\frac\mu2\|x\|^2+\lambda\|x\|_1,
 \label{eq:enet-problem}
\end{equation}
where $B\in\mathbb R^{m\times n}$ and $\mu,\lambda>0$.
Its saddle-point formulation is $\min_x\max_y\mathcal L(x,y)$, with
\[
 \mathcal L(x,y)=F(x)+\langle x,y\rangle-\delta_C(y),\qquad
 F(x)=\frac12\|Bx-b\|^2+\frac\mu2\|x\|^2,\qquad
 C=[-\lambda,\lambda]^n,
\]
where $\delta_C$ is the indicator of $C$. Thus the coupling operator is
$A=I$, and the dual potential is convex but not strongly convex.
We set $m=2000$, $n=5000$, and $\mu=1$. A matrix with independent
standard Gaussian entries is rescaled so that $\|B\|_2^2=9$.
Since $m<n$, the exact strong convexity and smoothness constants of $F$
are $\mu=1$ and $L_F=10$, respectively. The vector $x_{\mathrm{true}}$ has $500$
uniformly selected nonzero entries with independent standard Gaussian
values, and
\[
 b=Bx_{\mathrm{true}}+\sigma\xi,\qquad
 \sigma=0.01\|Bx_{\mathrm{true}}\|/\sqrt m,\qquad
 \xi\sim\mathcal N(0,I_m),\qquad
 \lambda=0.05\|B^\top b\|_\infty.
\]

We compare our IPDA-Linear (Algorithm \ref{alg:pd}) with IAPDA~\cite{HeHFCNSNS},
APAPC~\cite{CondatSR26}, and ASGARD+~\cite{TranDinhOL}.
All primal, dual, and auxiliary vector states are initialized at zero. IPDA-Linear uses the full step and geometric scaling
\[
 s=L_F^{-1},\qquad \tau=\sqrt{\mu/L_F}=\sqrt{0.1},\qquad
 \beta_k=(1-\tau)^{-k}\quad (\beta_0=1).
\]
IAPDA uses Option~1 on the equivalent dual saddle formulation, with
interchanged variables, $K=-I$, the full proximal mapping of $F$, and
$(\alpha_{\mathrm H},\beta_{\mathrm H},t_1)=(0.45,2,2)$.
APAPC splits the least-squares term from the quadratic regularizer;
its smoothness constant is $9$.
ASGARD+ uses its strongly convex parameter rule with initial acceleration
and smoothing parameters equal to one and smoothing center zero.
The remaining updates follow the cited rules.

Let $(x^*,y^*)$ be a reference saddle point. We measure the relative
primal-dual gap
\[ g_k=\frac{\Delta_k}{\Delta_0},\qquad
 \Delta_k=\mathcal L(x_k,y^*)-\mathcal L(x^*,y_k).
\]
For $e_k=x_k-x^*$ and $y_k\in C$, a stable evaluation is
\[
 \Delta_k=\frac12\|Be_k\|^2+\frac\mu2\|e_k\|^2
 +\sum_{i=1}^n |x_i^*|
   \bigl[\lambda-\operatorname{sign}(x_i^*)(y_k)_i\bigr].
\]
To assess stationarity and dual consistency in addition to this gap,
we also record the relative projected KKT residual
\begin{equation}
 r_k=\frac{\left(\|\nabla F(x_k)+y_k\|^2+
          \|y_k-\Pi_C(y_k+x_k)\|^2\right)^{1/2}}
          {\|\nabla F(0)\|},
 \label{eq:enet-residual}
\end{equation}
where $\Pi_C$ denotes Euclidean projection and the projection step is one.
The reference solution is computed independently by accelerated proximal
gradient iterations and active-set polishing, with a relative coordinate
KKT residual below $10^{-10}$.

Table~\ref{tab:enet} reports the iteration counts and corresponding
running times required to attain $g_k\le10^{-7}$ and $r_k\le10^{-5}$.
The results are presented separately for the two criteria, allowing
the methods to be compared in terms of both primal-dual gap reduction
and satisfaction of the KKT conditions. Figure~\ref{fig:enet} shows the evolution of these two errors
during the first $500$ iterations. IPDA-Linear produces the most rapid
overall decrease in both panels, while APAPC also exhibits fast error
reduction. In comparison, IAPDA and ASGARD+ show a more gradual decrease
over the displayed interval. The convergence curves are consistent
with the iteration counts reported in Table~\ref{tab:enet}.
On this instance, our method, IPDA-Linear, reaches both targets with
the fewest iterations and the shortest running time.
The gains are particularly pronounced over IAPDA and ASGARD+,
while improvements over APAPC are also observed under both criteria.

\begin{table}[htbp]
 \centering
\caption{Computational performance of the compared algorithms
on the elastic-net problem.} \label{tab:enet}
 \begingroup
 \renewcommand{\arraystretch}{1.3}
 \begin{tabular*}{0.9\linewidth}{@{\extracolsep{\fill}}lrrrr@{}}
  \toprule
  &\multicolumn{2}{c}{$g_k\le10^{-7}$}
  &\multicolumn{2}{c}{$r_k\le10^{-5}$}\\
  \cmidrule(lr){2-3}\cmidrule(lr){4-5}
  Method & Iterations & Time (s) & Iterations & Time (s)\\
  \midrule
  IPDA-Linear & 18   & 0.065  & 27   & 0.075\\
  IAPDA       & 251  & 6.722  & 1352 & 10.295\\
  APAPC       & 41   & 0.148  & 53   & 0.162\\
  ASGARD+     & 3754 & 18.423 & 2174 & 13.090\\
  \bottomrule
 \end{tabular*}
 \endgroup
\end{table}

\begin{figure}[htbp]
 \centering
 \includegraphics[width=0.48\linewidth]{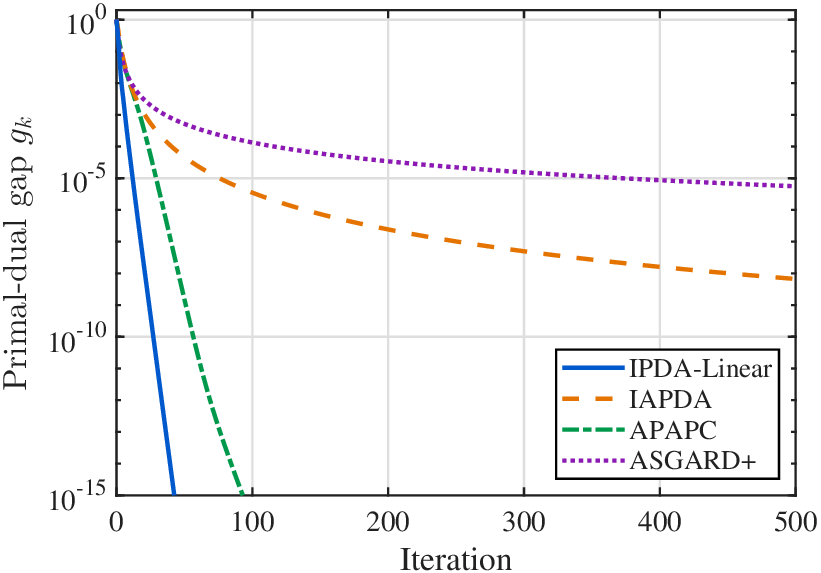}
 \hfill
 \includegraphics[width=0.48\linewidth]{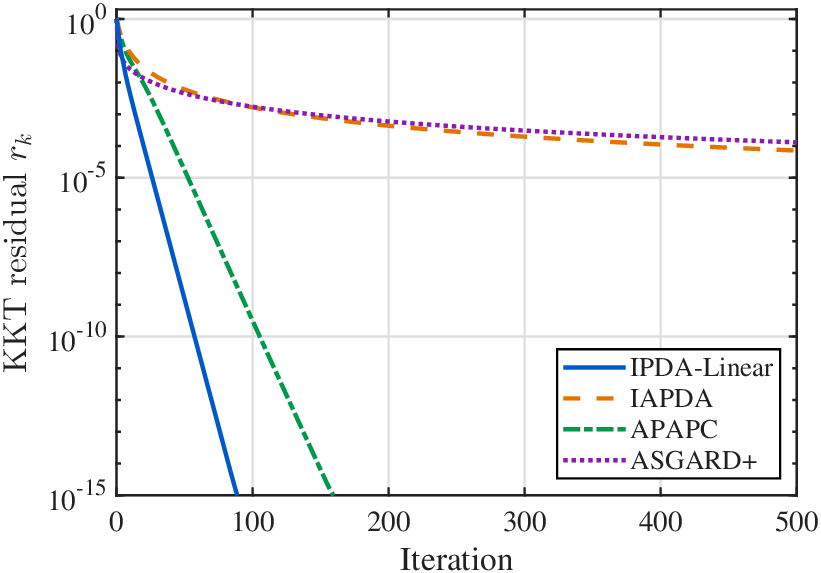}
 \caption{Convergence comparison for primal-dual gap $g_k$ and projected KKT residual $r_k$.}
 \label{fig:enet}
\end{figure}

\section{Conclusion}\label{sec:conclusion}
In this paper, we have developed accelerated primal-dual dynamics and an associated algorithm for bilinear saddle point problems with a strongly convex primal potential and a merely convex dual potential. With admissible exponential and geometric scales, combining primal strong convexity with dual scaling yields exponential gap decay in continuous time and linear gap decay in discrete time. Beyond gap estimates, we establish saddle residual bounds and convergence of the primal-dual trajectory and sequence, even when the dual solution is nonunique. These results complement the polynomial convergence estimates in \cite{DingFVCOAP,HeHFCNSNS,HeFarXiv26} and extend exponential and linear gap guarantees to the one-sided strongly convex setting, without the dual strong convexity required in \cite{HeAuto27}. Numerical experiments illustrate the theoretical results and demonstrate the improved convergence performance of the proposed methods.


\begin{thebibliography}{99}
\setlength{\itemsep}{0pt}   
\bibitem{AttouchMOR}
Attouch, H., Bo\c{t}, R.I., Nguyen, D.-K.: Fast convex optimization via time scale and averaging of the steepest descent. Math. Oper. Res. 50(4), 2633--2665 (2025)

\bibitem{AttouchCRSJO}
Attouch, H., Chbani, Z., Riahi, H.: Fast proximal methods via time scaling of damped inertial dynamics. SIAM J. Optim. 29(3), 2227--2256 (2019)

\bibitem{BaiJSC25}
Bai, J., Chen, Y., Yu, X., Zhang, H.: Generalized asymmetric forward--backward--adjoint algorithms for convex--concave saddle-point problem. J. Sci. Comput. 102(3), 80 (2025)

\bibitem{BaiJNM}
Bai, J., Li, J., Wu, Z.: Several variants of the primal-dual hybrid gradient algorithm with applications. Numer. Math. Theory Methods Appl. 13(1), 176--199 (2020)

\bibitem{BaiORL}
Bai, J., Rao, S.: A new insight on the prediction-correction framework with applications to several first-order methods. Oper. Res. Lett. 68, 107487 (2026)

\bibitem{Balhag}
Balhag, A., Chbani, Z., Riahi, H.: Linear convergence of inertial gradient dynamics with constant viscous damping coefficient and time-dependent rescaling parameter. HAL preprint \href{https://hal.science/hal-02610699}{hal-02610699} (2020)

\bibitem{BotCNMP}
Bo\c{t}, R.I., Csetnek, E.R., Nguyen, D.-K.: Fast augmented Lagrangian method in the convex regime with convergence guarantees for the iterates. Math. Program. 200, 147--197 (2023)

\bibitem{BotNJDE}
Bo\c{t}, R.I., Nguyen, D.-K.: Improved convergence rates and trajectory convergence for primal-dual dynamical systems with vanishing damping. J. Differ. Equ. 303, 369--406 (2021)

\bibitem{BoydFTML}
Boyd, S., Parikh, N., Chu, E., Peleato, B., Eckstein, J.: Distributed optimization and statistical learning via the alternating direction method of multipliers. Found. Trends Mach. Learn. 3(1), 1--122 (2011)


\bibitem{ChambollePJMIV}
Chambolle, A., Pock, T.: A first-order primal-dual algorithm for convex problems with applications to imaging. J. Math. Imaging Vis. 40, 120--145 (2011)

\bibitem{ChambollePAN}
Chambolle, A., Pock, T.: An introduction to continuous optimization for imaging. Acta Numer. 25, 161--319 (2016)

\bibitem{ChangJSC}
Chang, X., Yang, J.: A golden ratio primal-dual algorithm for structured convex optimization. J. Sci. Comput. 87(2), 47 (2021)

\bibitem{ChenLOSJO}
Chen, Y., Lan, G., Ouyang, Y.: Optimal primal-dual methods for a class of saddle point problems. SIAM J. Optim. 24(4), 1779--1814 (2014)

\bibitem{CondatSR26}
Condat, L., Sadiev, A., Richt\'arik, P.: A Nesterov-accelerated primal-dual splitting algorithm for convex nonsmooth optimization. arXiv:2604.09245 (2026)


\bibitem{DengSiam}
Deng, K., Wang, R., Zhu, Z., Zhang, J., Wen, Z.: The augmented Lagrangian methods: overview and recent advances. SIAM Rev. 68(3), 471--576 (2026)

\bibitem{DingFVCOAP}
Ding, K.-W., Fliege, J., Vuong, P.T.: Fast convergence of the primal-dual dynamical system and corresponding algorithms for a nonsmooth bilinearly coupled saddle point problem. Comput. Optim. Appl. 90, 151--192 (2025)

\bibitem{GhadimiFJECC}
Ghadimi, E., Feyzmahdavian, H.R., Johansson, M.: Global convergence of the heavy-ball method for convex optimization. In: 2015 European Control Conference (ECC), pp. 310--315. IEEE (2015)

\bibitem{HeFarXiv26}
He, X., Fang, Y.P.: Fast primal-dual methods for convex-concave bilinear saddle point problems: continuous-time dynamics and discrete algorithms. arXiv preprint \href{https://doi.org/10.48550/arXiv.2606.18724}{arXiv:2606.18724} (2026). 

\bibitem{HeAuto27}
He, X., Fang, Y.P.: Nesterov acceleration for strongly convex-strongly concave bilinear saddle point problems: discrete and continuous-time approaches. Automatica 195, 113322 (2027)

\bibitem{HeHFAUTO26}
He, X., He, D., Fang, Y.P.: Accelerated primal-dual methods for strongly convex objective functions in continuous and discrete time. Automatica 183, 112579 (2026)

\bibitem{HeHFSJCO}
He, X., Hu, R., Fang, Y.P.: Convergence rates of inertial primal-dual dynamical methods for separable convex optimization problems. SIAM J. Control Optim. 59(5), 3278--3301 (2021)

\bibitem{HeHFTAC}
He, X., Hu, R., Fang, Y.P.: ``Second-order primal'' + ``first-order dual'' dynamical systems with time scaling for linear equality constrained convex optimization problems. IEEE Trans. Autom. Control 67(8), 4377--4383 (2022)

\bibitem{HeHFAMO}
He, X., Hu, R., Fang, Y.P.: A second order primal-dual dynamical system for a convex-concave bilinear saddle point problem. Appl. Math. Optim. 89, 30 (2024)

\bibitem{HeHFJCAM}
He, X., Hu, R., Fang, Y.P.: Inertial accelerated augmented Lagrangian algorithms with scaling coefficients to solve exactly and inexactly linearly constrained convex optimization problems. J. Comput. Appl. Math. 460, 116425 (2025)

\bibitem{HeHFCNSNS}
He, X., Huang, N.J., Fang, Y.P.: Non-ergodic convergence rate of an inertial accelerated primal-dual algorithm for saddle point problems. Commun. Nonlinear Sci. Numer. Simul. 140, 108289 (2025)

\bibitem{KhalafiBPMLR}
Khalafi, M., Boob, D.: Accelerated primal-dual methods for convex-strongly-concave saddle point problems. In: Proceedings of the 40th International Conference on Machine Learning. Proc. Mach. Learn. Res., vol. 202, pp. 16250--16270. PMLR (2023)

\bibitem{KovalevGR}
Kovalev, D., Gasnikov, A., Richt\'{a}rik, P.: Accelerated primal-dual gradient method for smooth and convex-concave saddle-point problems with bilinear coupling. In: Advances in Neural Information Processing Systems, vol. 35, pp. 21725--21737 (2022)

\bibitem{LiTac}
Li, W., Zeng, X., Pavel, L.: Primal-dual accelerated mirror-descent method for constrained bilinear saddle-point problems. IEEE Trans. Autom. Control 71(2), 1373--1380 (2026)

\bibitem{LinAcc}
Lin, Z., Li, H., Fang, C.: Accelerated Optimization for Machine Learning: First-Order Algorithms. Springer, Singapore (2020)

\bibitem{LongCnsns}
Long, X.J., Nie, J.L., Gou, Z., Sun, X.K., Li, G.X.: An accelerated preconditioned primal-dual gradient algorithm for structured nonconvex optimization problems. Commun. Nonlinear Sci. Numer. Simul. 153, 109480 (2026)

\bibitem{LuoArx24}
Luo, H.: Accelerated primal-dual proximal gradient splitting methods for convex-concave saddle-point problems. arXiv preprint \href{https://doi.org/10.48550/arXiv.2407.20195}{arXiv:2407.20195} (2024)

\bibitem{LuoOpt}
Luo, H.: A continuous perspective on the inertial corrected primal-dual proximal splitting. Optimization 75(12), 3203--3232 (2026)

\bibitem{LuoJOGO}
Luo, H.: Accelerated primal-dual methods for linearly constrained convex optimization problems. J. Glob. Optim. 95(2), 405--437 (2026)

\bibitem{LuoCMP}
Luo, H., Chen, L.: From differential equation solvers to accelerated first-order methods for convex optimization. Math. Program. 195, 735--781 (2022)

\bibitem{LuoMC}
Luo, H., Zhang, Z.: A unified differential equation solver approach for separable convex optimization: splitting, acceleration and nonergodic rate. Math. Comput. 94(356), 3009--3041 (2025)

\bibitem{NesterovBook}
Nesterov, Y.: Introductory Lectures on Convex Optimization: A Basic Course. Applied Optimization, vol. 87. Kluwer Academic Publishers, Boston (2004)

\bibitem{PolyakCMMP}
Polyak, B.T.: Some methods of speeding up the convergence of iteration methods. USSR Comput. Math. Math. Phys. 4(5), 1--17 (1964)

\bibitem{Siegel}
Siegel, J.W.: Accelerated first-order methods: differential equations and Lyapunov functions. arXiv preprint \href{https://doi.org/10.48550/arXiv.1903.05671}
{arXiv:1903.05671} (2019)

\bibitem{SuBCJMLR}
Su, W., Boyd, S., Cand\`es, E.J.: A differential equation for modeling Nesterov's accelerated gradient method: theory and insights. J. Mach. Learn. Res. 17(153), 1--43 (2016)

\bibitem{SunAAAI}
Sun, T., Yin, P., Li, D., Huang, C., Guan, L., Jiang, H.: Non-ergodic convergence analysis of heavy-ball algorithms. Proc. AAAI Conf. Artif. Intell. 33(1), 5033--5040 (2019)

\bibitem{SunJOTA}
Sun, X.K., He, L., Long, X.J.: Inertial primal-dual dynamics with Hessian-driven damping and Tikhonov regularization for convex-concave bilinear saddle point problems. Optimization \href{https://doi.org/10.1080/02331934.2025.2578403}
{Doi:10.1080/02331934.2025.2578403} (2025)

\bibitem{TeschlBook}
Teschl, G.: Ordinary Differential Equations and Dynamical Systems. Graduate Studies in Mathematics, vol. 140. American Mathematical Society, Providence, RI (2012)

\bibitem{ThekPMLR}
Thekumparampil, K.K., He, N., Oh, S.: Lifted primal-dual method for bilinearly coupled smooth minimax optimization. In: Proceedings of the 25th International Conference on Artificial Intelligence and Statistics. Proc. Mach. Learn. Res., vol. 151, pp. 4281--4308. PMLR (2022)

\bibitem{TranDinhOL}
Tran-Dinh, Q.: A unified convergence rate analysis of the accelerated smoothed gap reduction algorithm. Optim. Lett. 16(4), 1235--1257 (2022)

\bibitem{TranMPC}
Tran-Dinh, Q., Alacaoglu, A., Fercoq, O., Cevher, V.: An adaptive primal-dual framework for nonsmooth convex minimization. Math. Program. Comput. 12(3), 451--491 (2020)

\bibitem{WangArx}
Wang, Z., Peypouquet, J.: Inertial primal dual dynamics with Hessian-driven damping for saddle point problems. arXiv preprint \href{https://doi.org/10.48550/arXiv.2607.26235}{arXiv:2607.26235} (2026).

\bibitem{WibisonoWJPNAS}
Wibisono, A., Wilson, A.C., Jordan, M.I.: A variational perspective on accelerated methods in optimization. Proc. Natl. Acad. Sci. USA 113(47), E7351--E7358 (2016)

\bibitem{WilsonRJJMLR}
Wilson, A.C., Recht, B., Jordan, M.I.: A Lyapunov analysis of accelerated methods in optimization. J. Mach. Learn. Res. 22(113), 1--34 (2021)

\bibitem{ZengLCIFAC}
Zeng, X., Dou, L., Chen, J.: Accelerated first-order continuous-time algorithm for solving convex-concave bilinear saddle point problem. IFAC-PapersOnLine 53(2), 7362--7367 (2020)

\bibitem{ZhangAMO}
Zhang, H., Sun, X., Li, S., Teo, K.L.: Inertial accelerated primal-dual algorithms for non-smooth convex optimization problems with linear equality constraints. Appl. Math. Optim. 93(3), 81 (2026)


\end{thebibliography}
\end{document}